\newtheorem{thm}{Theorem}[section]
\newtheorem{prop}[thm]{Proposition}
\newtheorem{cor}[thm]{Corollary}
\newtheorem{lem}[thm]{Lemma}
\newtheorem{defn}[thm]{Definition}
\DeclareMathOperator{\id}{id}
\begin{document}

\title{Some new computable structures of high rank}


\author{Matthew Harrison-Trainor}
\address{Group in Logic and the Methodology of Science, University of California, Berkeley, USA}
\email{matthew.h-t@berkeley.edu}
\thanks{The first author was supported by NSERC PGSD3-454386-2014.}

\author{Gregory Igusa}
\address{Department of Mathematics, University of Notre Dame, USA}
\email{gigusa@nd.edu}
\thanks{The second author was supported by EMSW21-RTG-0838506.}

\author{Julia F. Knight}
\address{Department of Mathematics, University of Notre Dame, USA}
\email{j1knight@nd.edu}

\subjclass[2010]{Primary 03D45 03C57}

\date{\today}

\begin{abstract}
We give several new examples of computable structures of high Scott rank. For earlier known computable structures of Scott rank $\omega_1^{CK}$, the computable infinitary theory is $\aleph_0$-categorical. Millar and Sacks asked whether this was always the case. We answer this question by constructing an example whose computable infinitary theory has non-isomorphic countable models.

The standard known computable structures of Scott rank $\omega_1^{CK}+1$ have infinite indiscernible sequences.  We give two constructions with no indiscernible ordered triple.
%
\end{abstract}

\maketitle

\section{Introduction}

Our main result answers an open problem posed by Millar and Sacks \cite{MS}. They asked whether every computable structure of Scott rank $\omega_1^{CK}$ is completely determined by the computable sentences it satisfies. We give a negative answer by building a computable structure of Scott rank $\omega_1^{CK}$ whose computable infinitary theory is \textit{not} $\aleph_0$-categorical. This is a new model of high Scott rank which is fundamentally different from all previously constructed models.

The Scott rank of a structure measures the internal complexity.  We give one definition below.  There are other definitions, which assign slightly different ordinals, but the important distinctions are the same for all definitions in current use.  In particular, if one definition assigns Scott rank $\omega_1^{CK}+1$, or $\omega_1^{CK}$, to a particular computable structure, then the other definitions do the same.

Let $\mathcal{A}$ be a countable structure for a computable language.  Our definition of Scott rank is based on a family of equivalence relations $\sim^\alpha$, for countable ordinals $\alpha$. Scott's original definition \cite{Sc} was based on a slightly different family of equivalence relations.  

\begin{defn}

Let $\bar{a}$ and $\bar{b}$ be tuples in $\mathcal{A}$ of the same finite length.  Then 

\begin{enumerate}

\item  $\bar{a}\sim^0\bar{b}$ if $\bar{a}$ and $\bar{b}$ satisfy the same atomic formulas

\item  for $\alpha > 0$, $\bar{a}\sim^\alpha\bar{b}$ if for each $\beta <\alpha$, for each $\bar{c}$, there exists 
$\bar{d}$, and for each $\bar{d}$, there exists $\bar{c}$, such that $\bar{a},\bar{c}\sim^\beta\bar{b},\bar{d}$.

\end{enumerate}

\end{defn}

For later use, we extend the definition $\sim^\alpha$ to allow tuples from different structures.

\begin{defn}

Let $\mathcal{A}$ and $\mathcal{B}$ be structures for the same language, and let $\bar{a}$ and $\bar{b}$ be tuples of the same length in $\mathcal{A}$, $\mathcal{B}$, respectively.  

\begin{enumerate}

\item  $(\mathcal{A},\bar{a})\sim^0 (\mathcal{B},\bar{b})$ if $\bar{a}$ and $\bar{b}$ satisfy the same atomic formulas in their respective structures.

\item  for $\alpha > 0$, $(\mathcal{A},\bar{a})\sim^\alpha (\mathcal{B},\bar{b})$ if for $\beta <\alpha$, for each $\bar{c}$ in $\mathcal{A}$, there exists $\bar{d}$ in $\mathcal{B}$, and for each $\bar{d}$ in $\mathcal{B}$, there exists $\bar{c}$ in $\mathcal{A}$, such that $(\mathcal{A},\bar{a},\bar{c})\sim^\beta (\mathcal{B},\bar{b},\bar{d})$. 

\end{enumerate}

\end{defn}

\noindent
\textbf{Remark}:  If $(\mathcal{A},\bar{a})\sim^\alpha (\mathcal{B},\bar{b})$, then for any $\Sigma_\alpha$ formula $\varphi(\bar{x})$ of $L_{\omega_1\omega}$, $\mathcal{A}\models\varphi(\bar{a})$ iff $\mathcal{B}\models\varphi(\bar{b})$.  

\bigskip

We define Scott rank, first for a tuple in a structure $\mathcal{A}$, and then for the structure itself.  

\begin{defn}\

\begin{enumerate}

\item  The \emph{Scott rank of a tuple} $\bar{a}$ is the least $\alpha$ such that for all $\bar{b}$, if 
$\bar{a}\sim^\alpha\bar{b}$, then for all $\gamma > \alpha$, $\bar{a}\sim^\gamma\bar{b}$.

\item  The \emph{Scott rank of the structure} $\mathcal{A}$ is the least ordinal greater than the Scott ranks of all tuples in $\mathcal{A}$. 

\end{enumerate}

\end{defn}   

Nadel \cite{N} observed that for a computable structure $\mathcal{A}$, two tuples are automorphic just in case they satisfy the same computable infinitary formulas.  This implies that the Scott rank of $\mathcal{A}$ is at most 
$\omega_1^{CK} + 1$.  The following is well-known.    

\bigskip
\noindent
\textbf{Fact}.  Let $\mathcal{A}$ be a computable structure.  

\begin{enumerate}

\item  $\mathcal{A}$ has computable Scott rank iff there is a computable ordinal $\alpha$ such that for all tuples $\bar{a}$ in $\mathcal{A}$, the orbit of $\bar{a}$ is defined by a computable $\Sigma_\alpha$ formula.  

\item  $\mathcal{A}$ has Scott rank $\omega_1^{CK}$ iff for each tuple $\bar{a}$, the orbit is defined by a computable infinitary formula, but for each computable ordinal $\alpha$, there is a tuple $\bar{a}$ whose orbit is not defined by a computable $\Sigma_\alpha$ formula.  

\item  $\mathcal{A}$ has Scott rank $\omega_1^{CK}+1$ iff there is a tuple $\bar{a}$ whose orbit is not defined by a computable infinitary formula.  

\end{enumerate}

There are familiar examples of computable structures having various computable ordinal ranks.  The canonical example of a computable structure of Scott rank $\omega_1^{CK}+1$ is the \emph{Harrison ordering}, a linear order with order type $\omega_1^{CK}(1+\eta)$ (see \cite{Harrison}).  Producing a computable structure of Scott rank $\omega_1^{CK}$ took longer.  Makkai gave an example of an arithmetical structure of Scott rank $\omega_1^{CK}$ \cite{M}, a ``group-tree''.  In \cite{KM}, Makkai's construction is re-worked to give a computable structure.  In \cite{CKM}, there is a simpler example, a computable tree of Scott rank 
$\omega_1^{CK}$.  In \cite{CGKM}, the tree is used to produce further structures in familiar classes---a field, a group, etc.\footnote{Although \cite{KM} was not published until 2011, it was written before \cite{CKM}, which was published in 2006, and \cite{CGKM}, which was published in 2009.}  

These examples of computable structures of Scott rank $\omega_1^{CK}$ all have the feature that the computable infinitary theory is $\aleph_0$-categorical.  The conjunction of the computable infinitary theory forms a Scott sentence.  In \cite{MS}, Millar and Sacks produced a structure $\mathcal{A}$ of Scott rank 
$\omega_1^{CK}$ such that the computable infinitary theory of $\mathcal{A}$ is not $\aleph_0$-categorical.  The structure $\mathcal{A}$ is not computable; it is not even hyperarithmetical, but it has the feature that $\omega_1^\mathcal{A} = \omega_1^{CK}$.  This means that $\mathcal{A}$ lives in a fattening of the admissible set $L_{\omega_1^{CK}}$.  

In \cite{MS}, Millar and Sacks asked whether there is a computable structure of Scott rank $\omega_1^{CK}$ whose computable infinitary theory is not $\aleph_0$-categorical.  The question is asked again in \cite{CGKM}. Millar and Sacks also asked whether there are similar examples for other countable admissible ordinals.  In \cite{F}, Freer proved the analog of the result of Millar and Sacks, producing, for an arbitrary countable admissible ordinal $\alpha$, a structure $\mathcal{A}$ with $\omega_1^{\mathcal{A}} = \alpha$, such that the theory of $\mathcal{A}$ in the admissible fragment $L_\alpha$ is not $\aleph_0$-categorical.  Freer's structure is not in 
$L_\alpha$, but in a fattening of $L_\alpha$.  The main result of the present paper says that there is a computable structure of Scott rank $\omega_1^{CK}$ for which the conjunction of the computable infinitary theory is not a Scott sentence. This answers positively the question of Millar and Sacks mentioned above. The construction appears in Section 2.

By an ``indiscernible sequence'', we mean a infinite sequence that is indiscernible for $L_{\omega_1 \omega}$ formulas.

\begin{defn}

Fix a structure $\mathcal{A}$. An \emph{indiscernible sequence} in $\mathcal{A}$ is a sequence $(a_i)_{i \in \omega}$ of elements of $\mathcal{A}$ such that for any two finite subsequences $a_{i_1},\ldots,a_{i_n}$ and $b_{j_1},\ldots,b_{j_n}$ (with $i_1 < i_2 < \dots < i_n$ and $j_1 < j_2 < \dots < j_n$) satisfy the same $L_{\omega_1 \omega}$ formulas.

\end{defn}

The Harrison ordering obviously has an infinite indiscernible sequence.  Other examples of computable structures of Scott rank $\omega_1^{CK} + 1$ share this feature.  Goncharov and Knight (unpublished) asked whether every computable structure of Scott rank $\omega_1^{CK}+1$ has an infinite indiscernible sequence.  At the same time, they also noticed that the structures of Scott rank $\omega_1^{CK}$ constructed by Makkai \cite{M}, and Knight and Millar \cite{KM}, did not have an infinite indiscernible sequence. 

In Section 3, we describe two constructions producing computable structures of Scott rank $\omega_1^{CK} + 1$ with not even an indiscernible ordered triple.  The first is produced by taking a Fra\"{i}ss\'{e} limit with infinitely many infinite equivalence classes, and putting the structure of the Harrison ordering on the equivalence classes. Alhough this structure has no indiscernible triples, it is effectively bi-interpretable \cite{HMMM} with the Harrison ordering, and hence it has an infinite indiscernible sequence of imaginaries.

The second example is a modified version of Makkai's construction \cite{M}. 
Makkai \cite{M} gave a ``computable operator'' taking an input tree $T$ to a group-tree $\mathcal{A}(T)$---the group-tree $\mathcal{A}(T)$ is computable uniformly in the input tree $T$. The structure $\mathcal{A}(T)$ is built by putting a group structure on each level of the tree (the language of the structure $\mathcal{A}(T)$ does not include the group operation, but, instead, has a collection of unary functions).  Makkai constructed a $\Delta^0_2$ ``thin'' tree $T$ such that $\mathcal{A}(T)$ had Scott rank $\omega_1^{CK}$. Knight and Millar \cite{KM} modified the construction to make the input tree (and hence the output group-tree) computable.  All of the elements of $\mathcal{A}(T)$ are definable from a collection of parameters $g_n$, one at each level.  Hence, $\mathcal{A}(T)$ does not have an indiscernible ordered triple.  We will show that if our input tree $T$ is the sequence of descending sequences in the Harrison ordering, then the resulting group-tree $\mathcal{A}(T)$ has Scott rank 
$\omega_1^{CK}+1$, but it still does not have an indiscernible triple (or even an indiscernible triple of imaginaries).

\section{\texorpdfstring{Scott rank $\omega_1^{CK}$}{Scott rank w1CK}}  

In this section, our goal is to prove the following. 

\begin{thm}

There is a computable structure $\mathcal{M}$ with Scott rank $\omega_1^{CK}$ such that the computable infinitary theory of 
$\mathcal{M}$ is not $\aleph_0$-categorical.

\end{thm}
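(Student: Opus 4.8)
The plan is to build $\mathcal{M}$ as a disjoint union (or suitably amalgamated combination) of building blocks indexed by notations for computable ordinals, following the template of the known computable structures of Scott rank $\omega_1^{CK}$ (the trees of \cite{CKM}, or Makkai's group-trees), but with a crucial twist: the blocks must come in (at least) two ``flavors'' so that a non-computable reshuffling of the flavors yields a non-isomorphic model of the same computable infinitary theory. Concretely, I would start from a computable subtree $T$ of $\omega^{<\omega}$ that is well-founded but has no computable bound on its ranks — a ``Harrison-style'' tree built from Kleene's $\mathcal{O}$ — so that for each computable $\alpha$ there is a node of rank $\geq \alpha$, yet every node has a computable-ordinal rank. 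Off of each node $\sigma$ of rank $\alpha$, attach one of two distinguishable gadgets $\mathcal{G}^0_\alpha$ or $\mathcal{G}^1_\alpha$, where $\mathcal{G}^0_\alpha$ and $\mathcal{G}^1_\alpha$ are back-and-forth equivalent at every computable level (so $\mathcal{G}^0_\alpha \sim^\beta \mathcal{G}^1_\alpha$ for all computable $\beta$), hence satisfy the same computable infinitary sentences, but are \emph{not} isomorphic — for instance finite pieces of the Harrison linear order of different finite ``colors'' arranged so that distinguishing them requires an infinitary formula of rank exactly the ordinal in question.

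The next step is the verification that $\mathcal{M}$ as built is computable: $T$ is computable, the rank function is not, but the gadget attached at $\sigma$ should be chosen by a computable rule depending only on $\sigma$ itself (say, its length mod $2$, or a fixed computable coloring), not on its rank, so that computability is immediate. The heart of the matter is then the Scott rank computation, which I would split into two halves. First, the orbit of each tuple is $\Sigma_\alpha$-definable by a computable infinitary formula for \emph{some} computable $\alpha$: this is the standard ``pull back the rank from the tree'' argument — a tuple lives in finitely many gadgets sitting over finitely many nodes, each of some computable rank, and one pushes a back-and-forth argument up the tree, using that each gadget is itself of computable Scott rank, to define the orbit at a computable level. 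Second, no single computable $\alpha$ works for all tuples: for each computable $\alpha$ choose a node $\sigma$ of $T$ of rank $\geq \alpha$ and a tuple inside the gadget over $\sigma$ (or naming $\sigma$ through the tree) whose orbit genuinely requires level $\geq \alpha$; this is where the unboundedness of ranks in $T$ is used, together with a lemma that $\sim^\beta$ on the relevant tuples tracks $\sim^\beta$ on the corresponding tree-nodes. Combining the two halves gives Scott rank exactly $\omega_1^{CK}$ by part (2) of the Fact.

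The final, and I expect most delicate, step is to produce a non-isomorphic model with the same computable infinitary theory, thereby defeating $\aleph_0$-categoricity. The idea is to take a nonstandard ``ordinal notation'' — i.e., pass to the Harrison linear order $\mathcal{H}$ of type $\omega_1^{CK}(1+\eta)$ in place of the true ordinals — and form a structure $\mathcal{M}'$ in which gadgets are attached all the way along $\mathcal{H}$, including at nonstandard points, with the two flavors assigned along the nonstandard part in a way not realizable in $\mathcal{M}$. One must check: (a) $\mathcal{M}'$ and $\mathcal{M}$ are $\sim^\alpha$-equivalent for every \emph{computable} $\alpha$ — because at computable levels one cannot see past the well-founded part, and the two flavors of gadget are $\sim^\beta$-equivalent for all computable $\beta$ — hence by the Remark they satisfy the same $\Sigma_\alpha$ formulas of $L_{\omega_1\omega}$ for all computable $\alpha$, in particular the same computable infinitary sentences; and (b) $\mathcal{M} \not\cong \mathcal{M}'$, because in $\mathcal{M}$ every element sits over a well-founded node (every tuple has computable Scott rank), whereas $\mathcal{M}'$ has elements over the nonstandard part whose ``rank'' is not a computable ordinal — or, more robustly, one arranges a first-order–over-$L_{\omega_1\omega}$-indistinguishable but genuinely different assignment of flavors so that an isomorphism would force a bijection the nonstandard order cannot admit. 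The main obstacle is calibrating the gadgets $\mathcal{G}^0_\alpha, \mathcal{G}^1_\alpha$ simultaneously so that they are indistinguishable at \emph{every} computable level yet their ``discernibility ordinal'' climbs cofinally in $\omega_1^{CK}$ — this is exactly what prevents the conjunction of the computable infinitary theory from pinning down the structure, and getting the bookkeeping right (especially ensuring $\mathcal{M}$ itself still has Scott rank exactly $\omega_1^{CK}$ and not $\omega_1^{CK}+1$) is where the real work lies.
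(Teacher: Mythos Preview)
Your outline has the right large-scale shape (build $\mathcal{M}$ of rank $\omega_1^{CK}$, then produce a non-isomorphic $\mathcal{N}\equiv_\infty\mathcal{M}$ by adding a ``limit'' element), but the central mechanism---two fixed flavors $\mathcal{G}^0_\alpha,\mathcal{G}^1_\alpha$---cannot work as stated, and the proposal is internally inconsistent about what these gadgets are supposed to do. You ask that $\mathcal{G}^0_\alpha\sim^\beta\mathcal{G}^1_\alpha$ for \emph{all} computable $\beta$ yet that they be non-isomorphic; then two sentences later you say distinguishing them requires a formula ``of rank exactly the ordinal in question'', and in the Scott-rank argument you assume ``each gadget is itself of computable Scott rank''. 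These three requirements are pairwise incompatible. If the two flavors are separated by some computable $\Sigma_\alpha$ formula, then the full flavor-pattern is computable-infinitary definable over the index set, and any reshuffling in $\mathcal{M}'$ is detected by a computable sentence, so $\mathcal{M}\not\equiv_\infty\mathcal{M}'$. If instead they are $\sim^\beta$-equivalent for all computable $\beta$ yet non-isomorphic, then each gadget already has Scott rank $\ge\omega_1^{CK}$, so either they are themselves a solution to the theorem (making the construction circular) or they push $SR(\mathcal{M})$ to $\omega_1^{CK}+1$. Either way the two-flavor idea collapses; your closing sentence correctly flags this calibration as ``where the real work lies'', but no calibration of a fixed pair can succeed.

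What the paper actually does is replace the two flavors by a \emph{cofinal family} of approximations: a single tree $T^*$ of rank $\omega_1^{CK}$ together with trees $T^a$ (for $a$ along a $\Pi^1_1$ path through $\mathcal{O}$) satisfying $T^a\sim^{|a|}T^*$ but $T^a\not\cong T^*$. The elements of $A^{\mathcal{M}}$ are \emph{finite} increasing sequences $\sigma$ in a Harrison ordering, and to each $\sigma$ one attaches, for every $\tau$, a tree $T_{(\tau,\sigma)}$ which is the ``small'' $T_{H(\tau)}$ when $\tau\preceq\sigma$ and is $T^*$ otherwise; so every element of $\mathcal{M}$ carries only \emph{finitely many} small trees. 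The companion model $\mathcal{N}$ adjoins a single infinite path $\pi$ whose entries are cofinal in the well-founded part, so $\pi$ carries \emph{infinitely many} small trees with ranks unbounded in $\omega_1^{CK}$. Non-isomorphism is then the finite/infinite distinction, while $\mathcal{M}\preceq_\infty\mathcal{N}$ holds because at any fixed computable level $\alpha$ only the finitely many $T^a$ with $|a|<\alpha$ are visible, and on those $\pi$ agrees with some genuine finite sequence in $\mathcal{M}$. The bounded-yet-cofinal bookkeeping (the sets $R_n$ in the paper) is exactly what makes both the Scott-rank-$\omega_1^{CK}$ computation and the $\preceq_\infty$ argument go through simultaneously; this graded family of approximations, not a binary flavor swap, is the missing idea.
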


We will use some material on trees from \cite{CKM}.  The trees are isomorphic to subtrees of $\omega^{<\omega}$.  We use a language with a successor relation.  Here are the facts that we need.  There is a computable tree $T^*$ of Scott rank $\omega_1^{CK}$.  In addition, for a fixed $\Pi^1_1$ path $P$ through $\mathcal{O}$, there is a family of approximating trees $(T^a)_{a\in P}$ of computable Scott rank.  For $a\in P$ such that $|a| = \alpha$, $T^a$ has tree rank at most $\omega(\alpha+1)$, and $T^*\sim^\alpha T^a$.  The family $(T^a)_{a \in P}$ is computable uniformly in $a$, and the tree ranks of the nodes of $T^a$ are also computable, uniformly in $a$, in the sense that we can effectively label the nodes of $T^a$ by pairs $(b,n)$, where $\sigma\in T^a$ has label $(b,n)$ for $b\in P$ just in case $\sigma$ has tree rank $\omega\cdot\beta + n$ for $|b| = \beta$.  The tree $T^*$ and the approximations $T^a$ are all ``rank-homogeneous''.  

\begin{defn}

$T$ is \emph{rank-homogeneous} provided that for each node $x$ at level $n$, if $x$ has tree rank $\alpha$ and there is a node at level $n+1$ (not necessarily a successor of $x$) of tree rank $\beta < \alpha$, then $x$ has infinitely many successors of tree rank $\beta$.  Also, for each node $x$ at level $n$, if $x$ has infinite rank, then it has infinitely many successors of infinite rank, in addition to infinitely many of each ordinal rank $\beta$ that occurs at level $n+1$.  

\end{defn}

\begin{defn}

For $\mathcal{A}$ and $\mathcal{B}$ rank-homogeneous trees, with $\bar{a}$ in $\mathcal{A}$ and $\bar{b}$ in $\mathcal{B}$, we write $(\mathcal{A},\bar{a})\approx^\alpha(\mathcal{B},\bar{b})$ provided that 

\begin{enumerate}

\item  for all $n$, the tree ranks less than $\omega\alpha$ of nodes at level $n$ are the same in $\mathcal{A}$ and $\mathcal{B}$,

\item  the subtree of $\mathcal{A}$ ``generated'' by $\bar{a}$ (by closing under predecessors) is isomorphic to the subtree of $\mathcal{B}$ generated by $\bar{b}$, with an isomorphism taking $\bar{a}$ to $\bar{b}$,

\item for corresponding elements $x$ in the subtree of $\mathcal{A}$ generated by $\bar{a}$ and $x'$ in the subtree of $\mathcal{B}$ generated by $\bar{b}$, either the tree ranks of $x$ and $x'$ match, or else both are at least $\omega\cdot\alpha$.

\end{enumerate}  

\end{defn}

\begin{lem}

Let $\mathcal{A}$ and $\mathcal{B}$ each be one of our trees $T^{*}$ or $T^a$.  If \\
$(\mathcal{A},\bar{a})\approx^\alpha (\mathcal{B},\bar{b})$, then $(\mathcal{A},\bar{a})\sim^\alpha (\mathcal{B},\bar{b})$.  

\end{lem}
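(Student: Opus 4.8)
The plan is to prove this by induction on $\alpha$, showing that the relation $\approx^\alpha$ has the back-and-forth property with respect to $\sim^\alpha$. The base case $\alpha = 0$ is immediate: if $(\mathcal{A},\bar a) \approx^0 (\mathcal{B},\bar b)$, then condition (2) of $\approx^0$ gives an isomorphism of the generated subtrees taking $\bar a$ to $\bar b$, so $\bar a$ and $\bar b$ satisfy the same atomic formulas (successor relations, equalities, level relations if present), hence $(\mathcal{A},\bar a) \sim^0 (\mathcal{B},\bar b)$.

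For the inductive step, fix $\alpha > 0$ and suppose the claim holds for all $\beta < \alpha$. Given $(\mathcal{A},\bar a) \approx^\alpha (\mathcal{B},\bar b)$, I need to show that for each $\beta < \alpha$ and each $\bar c$ in $\mathcal{A}$ there is $\bar d$ in $\mathcal{B}$ with $(\mathcal{A},\bar a,\bar c) \approx^\beta (\mathcal{B},\bar b,\bar d)$, and symmetrically; the inductive hypothesis then upgrades $\approx^\beta$ to $\sim^\beta$, giving $(\mathcal{A},\bar a)\sim^\alpha(\mathcal{B},\bar b)$. By closing $\bar c$ under predecessors and adding elements one at a time, it suffices to handle the case where $\bar c$ is a single new node $x$ sitting directly above some node already in the subtree generated by $\bar a$ (or at level $0$). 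I want to find a node $x'$ in $\mathcal{B}$ above the corresponding node $y'$ such that: $x'$ is at the same level as $x$; if $x$ and $y$ have matching ranks then $x'$ sits the same way; and the rank of $x'$ either matches that of $x$ or both are $\geq \omega\beta$. Here is where rank-homogeneity and condition (1) of $\approx^\alpha$ do the work. If $x$ has tree rank $< \omega\beta$, then since $\alpha > \beta$ we have $\omega\beta \le \omega\alpha$, so by condition (1) this exact rank occurs at the relevant level in $\mathcal{B}$, and by rank-homogeneity the predecessor $y'$ (which has rank $\geq$ that of $x$, with the matching-or-large property) actually has infinitely many successors of that rank — so we can pick $x'$ with rank exactly $\operatorname{rk}(x)$. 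If instead $\operatorname{rk}(x) \geq \omega\beta$, then either $\operatorname{rk}(y) \geq \omega\beta$ as well (use the infinite-rank clause of rank-homogeneity, or homogeneity at large ranks, to produce a successor $x'$ of $y'$ with rank $\geq \omega\beta$), or $\operatorname{rk}(y)$ is a specific ordinal $\geq \omega\beta$ matching $\operatorname{rk}(y')$, and again homogeneity of $y'$ supplies a successor of sufficiently large rank. One must check that after adding $x'$, conditions (1)–(3) of $\approx^\beta$ still hold: (1) is inherited since $\omega\beta \leq \omega\alpha$ and the level-rank sets agreed up to $\omega\alpha$; (2) holds because we attached $x'$ above the image of $x$'s predecessor exactly matching the tree shape; (3) holds by the choice of $\operatorname{rk}(x')$.

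The main obstacle is the bookkeeping in the inductive step: correctly navigating the three cases for where $\operatorname{rk}(x)$ falls relative to $\omega\beta$ and $\omega\alpha$, and making sure in each case that rank-homogeneity gives enough successors of the right rank even when several new nodes of $\bar c$ must be placed simultaneously as predecessors of one another (so one processes them from the bottom up, and when choosing $x'$ its predecessor in $\mathcal{B}$ may be a node just added rather than one from the original $\bar b$). A subtle point is the case $\operatorname{rk}(x) \geq \omega\alpha$ with $\beta < \alpha$: here condition (1) of $\approx^\alpha$ says nothing about which ranks $\geq \omega\alpha$ appear, but we only need a successor of rank $\geq \omega\beta$, which by the infinite-rank clause of rank-homogeneity is available above any node $y'$ of rank $\geq \omega\beta$ — and $y'$ does have rank $\geq \omega\beta$ because its counterpart $y$ has rank $> \operatorname{rk}(x)$ wait, $\geq \operatorname{rk}(x) \geq \omega\alpha > \omega\beta$ by condition (3) of $\approx^\alpha$ (or the ranks genuinely match). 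Once this case analysis is laid out carefully, the symmetry between $\mathcal{A}$ and $\mathcal{B}$ makes the ``back'' direction identical to the ``forth'' direction, and the Remark following the second definition records the payoff that $\sim^\alpha$ agreement transfers $\Sigma_\alpha$ truth.
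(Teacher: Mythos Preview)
Your approach is correct and essentially the same as the paper's: induction on $\alpha$, showing that $\approx^\alpha$ has the back-and-forth property, with a case analysis on whether the rank of the new node falls below or above the threshold $\omega\beta$ (the paper frames this as the successor step $\alpha\to\alpha+1$ with threshold $\omega\alpha$, but the content is identical). The one point you flag but do not fully resolve---choosing the image $x'$ with rank high enough to leave room for its descendants in $\bar c$---is handled in the paper exactly as you anticipate, by taking $x'$ of rank $\omega\beta+n$ for $n$ large enough; your write-up would benefit from stating this explicitly rather than leaving it as an acknowledged bookkeeping obstacle (and from removing the stray ``wait'' mid-sentence).
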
  

\begin{proof}

The statement is clear for $\alpha = 0$.  Also, if $\alpha$ is a limit ordinal, and the statement holds for all $\beta <\alpha$, then it holds for $\alpha$.  Supposing that it holds for $\alpha$, we prove it for $\alpha+1$.  For simplicity, suppose that $\bar{a}$ and $\bar{b}$ are subtrees.  Let $a$ be an element of $\bar{a}$ that has a new successor $c$ at the top of a finite subtree $\bar{c}$.  Let $b$ be the element corresponding to $a$.  We need $d$ and $\bar{d}$ matching $c$ and $\bar{c}$.  If the tree ranks of $a$ and $b$ match, and are less than $\omega(\alpha+1)$, then we can choose $d$ and $\bar{d}$ with tree ranks matching the corresponding elements of $c$ and $\bar{c}$.  If the tree ranks of $a$ and $b$ are at least $\omega(\alpha+1)$, and the tree rank of $c$ and the elements of $\bar{c}$ are at least $\omega\cdot\alpha$, then we choose $d$ and $\bar{d}$ also with tree ranks at least $\omega\cdot\alpha$.  We can choose $d$ of rank $\omega\cdot\alpha + n$ for $n$ sufficiently large to leave room for choosing the rest of $\bar{d}$.  If some of the elements of $\bar{c}$ have tree ranks less than $\omega\cdot\alpha$, then we choose the corresponding elements of $\bar{d}$ with matching tree ranks.  
\end{proof}

Given $a$, we can effectively find a Scott sentence for $T^a$.  We give the tree rank of the top node, and for each $n$, we say what are the tree ranks of nodes at level $n$.  Finally, we say that the tree is ``rank-homogeneous''; i.e., for all $x$ at level $n$ of tree rank $\beta$, and all $\gamma < \beta$ such that there is a node of tree rank $\gamma$ at level $n+1$, $x$ has infinitely many successors of tree rank $\gamma$. This is effective since we have, uniformly in $a$, a function giving the ranks of the nodes of $T^a$.

\bigskip

We want a computable copy $(U,<_U,S_U)$ of the Harrison ordering with the successor relation, with a family of trees 
$(T_u)_{u\in U}$, uniformly computable in $u$, such that if $pred(u)$ has order type $\alpha$ with notation $a\in P$, then $T_u\cong T^a$, and if $pred(u)$ is not well-ordered, then $T_u\cong T^{*}$. 

\begin{lem}

There is a computable structure $\mathcal{A}$ with universe the union of disjoint sets $U$ and $V$, with an ordering $<$ of type 
$\omega_1^{CK}(1+\eta)$ and successor relation $S$ on $U$, and with a function $Q$ from $V$ to $U$ such that for each $u\in U$, $Q^{-1}(u)$ is an infinite set, with a tree structure $T_u$.  If $pred(u)$ has order type $\alpha$ with notation $a\in P$, then $T_u\cong T^a$.  If $pred(u)$ is not well ordered, then $T_u\cong T^*$.  

\end{lem}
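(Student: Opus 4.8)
The plan is to build the structure $\mathcal{A}$ by a single uniform computable construction, relying on the fact from \cite{CKM} that the family $(T^a)_{a \in P}$ is uniformly computable in $a$, together with a computable copy of the Harrison ordering whose initial segments we can "name." First I would fix a computable copy $(U, <, S)$ of the Harrison ordering of type $\omega_1^{CK}(1+\eta)$ with successor relation. The subtlety is that for a given $u \in U$ we cannot decide whether $pred(u)$ is well-ordered, nor — even if it is — compute a notation $a \in P$ for its order type by a simple search. This is the step I expect to be the main obstacle, and the way around it is the $\Pi^1_1$ path $P$ through $\mathcal{O}$: since the approximating trees $T^a$ are uniformly computable in $a \in P$, and the tree ranks of their nodes are uniformly computable, one can run all the approximations simultaneously and let the true ordinal structure of $U$ "select" the correct tree in the limit, in the manner of the standard pseudo-well-ordering constructions.

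Concretely, I would proceed as follows. Recall that $P$ is a fixed $\Pi^1_1$ path through $\mathcal{O}$; by the usual machinery there is a computable pseudo-well-ordering, which is exactly the Harrison ordering, whose well-founded part has order type $\omega_1^{CK}$, and along which we can attach the notations from $P$: that is, we can arrange our computable copy $U$ so that the elements $u$ with $pred(u)$ well-ordered are (order-isomorphically) matched with an initial segment of $P$, uniformly computably. For such $u$, let $a(u) \in P$ be the matched notation, so $|a(u)|$ is the order type of $pred(u)$; for $u$ in the ill-founded part, there is no such notation. Now define $V = \bigsqcup_{u \in U} Q^{-1}(u)$ with $Q$ the projection, and on each fiber $Q^{-1}(u)$ put the tree $T_u$, where $T_u \cong T^{a(u)}$ when $u$ is in the well-founded part and $T_u \cong T^*$ otherwise. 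The point is that $T^*$ and all the $T^a$ live inside $\omega^{<\omega}$ and are uniformly computable, so the map $u \mapsto (\text{diagram of } T_u)$ is computable: for $u$ along $P$ we copy $T^{a(u)}$, and for $u$ past the well-founded part we copy $T^*$ — and because $P$ is a computable path through the computable pseudo-order, this case distinction is itself made computably (we never need to decide well-foundedness; we simply follow $P$).

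To make this rigorous I would spell out the labelling: by the Fact quoted from \cite{CKM}, we can effectively label the nodes of $T^a$ by pairs $(b,n)$ with $b \in P$, so a node has tree rank $\omega\cdot\beta + n$ when $|b| = \beta$; the same holds for $T^*$ with labels drawn from the ill-founded part. Thus the whole structure $\mathcal{A}$ — the relations $<, S$ on $U$, the function $Q$, and the successor relations of the trees $T_u$ on the fibers — is obtained by a single effective amalgamation of uniformly computable pieces indexed along the computable copy $U$, hence $\mathcal{A}$ is computable. Finally I would check that this $\mathcal{A}$ meets the three stated requirements: $<$ has type $\omega_1^{CK}(1+\eta)$ by construction; each $Q^{-1}(u)$ is infinite since every $T^a$ and $T^*$ is an infinite tree; and $T_u \cong T^{a(u)} \cong T^{|pred(u)|}$-notation tree in the well-founded case and $T_u \cong T^*$ in the ill-founded case, which is precisely the claim. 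The only genuine work is confirming that "following $P$ along the pseudo-order" can be done computably and that it assigns, to each well-founded $pred(u)$ of order type $\alpha$, a notation $a \in P$ with $|a| = \alpha$; this is standard once one sets up $U$ as the appropriate computable pseudo-well-ordering attached to $P$, so the proof is essentially an organized bookkeeping argument rather than a new idea.
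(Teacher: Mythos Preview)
Your proposal has a genuine gap at exactly the point you flag as ``the only genuine work.'' You write that ``$P$ is a computable path through the computable pseudo-order,'' but $P$ is the fixed $\Pi^1_1$ path through $\mathcal{O}$ from \cite{CKM}; it is not computable. So you cannot perform the case split ``for $u$ along $P$ copy $T^{a(u)}$, for $u$ past the well-founded part copy $T^*$'' computably, and your two descriptions of the construction --- a case split versus ``we never need to decide well-foundedness; we simply follow $P$'' --- are in tension. The most charitable reading is that you intend to take $U$ to be a computable linear extension of $P$ inside some $\mathcal{O}^*$-style pseudo-notation system and apply the uniform procedure $a\mapsto T^a$ to every $u\in U$, well-founded or not. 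But then the crucial claim becomes: for $u$ in the ill-founded part, the output of that uniform procedure is isomorphic to $T^*$. You assert this but give no argument for it, and it is not obvious from the description of the $T^a$ in \cite{CKM}.

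The paper's proof sidesteps this difficulty entirely by using Barwise--Kreisel Compactness rather than a direct construction. It writes down a $\Pi^1_1$ set $\Gamma$ of computable infinitary sentences axiomatizing the desired $\mathcal{A}$: besides the expected axioms on $(U,<_U,S_U)$ and the clause that $T_u\cong T^a$ whenever $pred(u)$ has order type $|a|$ for $a\in P$, the key axiom is that for $u<_U v$ with $pred(u)$ of order type $\alpha$, the trees $T_u$ and $T_v$ satisfy the same computable $\Sigma_\alpha$ sentences. Every hyperarithmetical subset of $\Gamma$ is satisfied by a genuine well-ordering built from a bounded initial segment of $P$, so compactness gives a computable model of all of $\Gamma$. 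In that model, any $u$ with $pred(u)$ ill-founded has $T_u$ agreeing with $T^*$ on all computable infinitary sentences; since the computable infinitary theory of $T^*$ is $\aleph_0$-categorical, $T_u\cong T^*$ follows. That categoricity step is precisely what replaces the verification you omitted.
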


\begin{proof}

To prove the lemma, we use Barwise-Kreisel Compactness.  Let $\Gamma$ be a $\Pi^1_1$ set of computable infinitary sentences 
in the language of $\mathcal{A}$ 
saying that $U$ and $V$ are disjoint, $U$ is linearly ordered by $<_U$, $S_U$ is the successor relation on the ordering, $Q$ maps $V$ onto $U$ such that for each $u\in U$, $Q^{-1}(u)$ is infinite, $S_T$ is the union of successor relations putting a tree structure $T_u$ on the set $Q^{-1}(u)$, 
with further axioms guaranteeing the following:   

\begin{enumerate}

\item  for each computable ordinal $\alpha$, the ordering $(U,<_U)$ has an initial segment of type $\alpha$, 

\item  for each computable ordinal $\alpha$, each $u\in U$ 
is the left endpoint of an interval of type $\omega^\alpha$,

\item  the ordering $(U,<_U)$ has no infinite hyperarithmetical decreasing sequence,

\item  for each $u\in U$, if $pred(u)$ has order type $\alpha$, where $a\in P$ is the notation for $\alpha$, then $T_u\cong T^a$, 

\item  for a computable ordinal $\alpha$, if $u <_U v$, where $pred(u)$ has order type $\alpha$, then $T_u$ and $T_v$ satisfy the same computable $\Sigma_\alpha$ sentences.


\end{enumerate}

For a hyperarithmetical set $\Gamma'\subseteq\Gamma$, there is a computable ordinal $\gamma$ bounding the ordinals $\alpha$ corresponding to sentences in $\Gamma'$ of types (1), (2), (4), and (5).  Then we get a model of $\Gamma'$ as follows.  We fix computable sets $U$ and $V$ in advance.  Let $c$ be the notation for $\gamma$ in $P$.  Since $<_O-pred(c)$ is c.e., we have a computable function $f$ from $U$ $1-1$ onto $<_O-pred(c)$.  For $x,y\in U$, $x <_U y$ iff $f(x) <_O f(y)$.  Let $S_U(x,y)$ iff $f(y) = 2^{f(x)}$.  If $f(x) = a$, then $T_x\cong T^a$.  
Since every hyperarithmetical subset of $\Gamma$ has a model, the whole set does.  In this model, $(U,<_U)$ has order type $\omega_1^{CK}(1+\eta)$.  For $u\in U$ such that $pred(u)$ is not well-ordered, $T_u$ satisfies the computable infinitary sentences true in $T^*$.  Since $T_u$ is computable, it must be isomorphic to $T^*$.   
\end{proof}



\begin{lem}

Let $I$ be the well-ordered initial segment of $U$ of order type $\omega_1^{CK}$.  There is a uniformly computable sequence 
$(R_n)_{n \in \omega}$ of infinite subsets of $U$ with the following properties:

\begin{enumerate}

\item  $R_0$ contains some element of $I$,

\item  for each $n$, there exists $u\in I$ that is an upper bound on $R_n\cap I$,

\item  for each $u\in R_n$, there exists $v\in R_{n+1}$ such that $u < v$,

\item  for each $u\in R_n\cap I$, there exists $v\in R_{n+1}\cap I$ such that $u < v$, 

\item  $\cup_n R_n$ is unbounded in the well-ordered initial segment of $U$.

\end{enumerate}

\end{lem}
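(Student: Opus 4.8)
The plan is to build $(R_n)_{n\in\omega}$ by enlarging the Barwise--Kreisel compactness construction of the preceding lemma. To the language of $\mathcal{A}$ I would add unary predicates $R_n$ ($n\in\omega$) together with auxiliary constants $e_n$ ($n\in\omega$), and I would add to the $\Pi^1_1$ set $\Gamma$ a further $\Pi^1_1$ family of computable infinitary sentences asserting:
\begin{enumerate}
\item each $R_n$ is infinite, and $R_0$ contains the $<_U$-least element of $U$;
\item $e_n <_U e_{n+1}$ for all $n$, and each $e_n$ lies in $I$;
\item for every $u$: if $R_n(u)$ and $e_n <_U u$, then $T_u$ has a node of tree rank $\geq\omega\alpha$ --- one sentence for each computable ordinal $\alpha$;
\item for every $u$: if $R_n(u)$ then $R_{n+1}(v)$ for some $v$ with $u <_U v$; and there is $v$ with $R_{n+1}(v)$, $e_n <_U v$, and $v\in I$;
\item for each computable ordinal $\alpha$ there are $m$ and $u$ with $R_m(u)$ and $pred(u)$ of order type exactly $\alpha$.
\end{enumerate}

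\textbf{Why these are legitimate axioms.} The only assertions that refer to $I$ are ``$v\in I$'', and the key point is that this is expressible by a \emph{single} computable infinitary sentence. By the axioms of $\Gamma$, in any model the tree $T_v$ is isomorphic either to some $T^a$ (of computable tree rank) or to $T^{*}$ (of tree rank $\omega_1^{CK}$), so ``$v\in I$'' is equivalent to $\bigvee_{\beta<\omega_1^{CK}}[\,T_v \text{ has tree rank} \leq\beta\,]$, a computable disjunction of computable infinitary sentences since ``$T$ has tree rank $\leq\beta$'' is computable infinitary for fixed computable $\beta$. Similarly, for fixed computable $\alpha$, ``$pred(u)$ has order type exactly $\alpha$'' is the conjunction of a $\Sigma$ statement (there is a $<_U$-increasing $\alpha$-chain below $u$) and a $\Pi$ statement (there is no $<_U$-increasing $(\alpha{+}1)$-chain below $u$), and ``$T_u$ has a node of rank $\geq\omega\alpha$'' is computable infinitary; all the displayed families are uniformly $\Pi^1_1$ in their ordinal/integer indices, so the enlarged $\Gamma$ is still $\Pi^1_1$.

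\textbf{Satisfiability and extraction of the properties.} First I would check that every hyperarithmetical subset $\Gamma'$ of the enlarged set has a model. Since a hyperarithmetical set of ordinal notations is bounded below $\omega_1^{CK}$, the computable ordinals mentioned in $\Gamma'$ are all below some computable limit ordinal $\gamma$; let $\alpha^{*}$ be their supremum. I would take the explicit auxiliary structure of the preceding lemma, in which $U$ carries order type $\gamma$ (so $I=U$) and each point gets its tree via the map $f$, and then set $R_n:=U$ for all $n$ and let $e_n$ be a point of $U$ at a level exceeding $\alpha^{*}$, with the $e_n$ chosen $<_U$-increasing. Since every point lies in $I$, clauses $(1),(2),(4),(5)$ hold at once, and clause $(3)$ holds because any $u>_U e_n$ then lies above level $\alpha^{*}$, so $T_u\cong T^{f(u)}$ has tree rank larger than $\omega\alpha$ for every $\alpha$ occurring in $\Gamma'$. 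By Barwise--Kreisel compactness (in the form yielding a computable model) the full enlarged set has a computable model; in it $(U,<_U)$ has order type $\omega_1^{CK}(1+\eta)$ and $I$ is its well-ordered initial segment, and the $R_n$ are uniformly computable subsets of $U$. In this model, if $u\in R_n$ and $e_n<_U u$ then $T_u$ has a node of rank $\geq\omega\alpha$ for \emph{every} computable $\alpha$, hence $T_u$ has tree rank $\geq\omega_1^{CK}$, hence $T_u\cong T^{*}$, hence $pred(u)$ is not well-ordered; so $R_n\cap I\subseteq pred(e_n)\cup\{e_n\}\subseteq I$, and since $e_n\in I$ this gives clause $(2)$ of the lemma. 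Clauses $(1),(3),(4)$ are read off directly from the corresponding axioms (the witness $v$ in $(4)$ lies in $I$ by the ``$v\in I$'' conjunct), and clause $(5)$ follows because the witnesses furnished by axiom $(5)$ have $pred(u)$ of computable order type and hence lie in $I$, so $\bigcup_n R_n$ meets every level of $I$.

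\textbf{Main obstacle.} The delicate interaction is between clauses $(2)$ and $(5)$: $\bigcup_n R_n$ must meet every level of $I$ while each $R_n\cap I$ stays bounded in $I$. One cannot enforce $(2)$ by literally bounding $R_n$ below a point: if $\bigcup_n R_n$ were contained in $I$ it would, being $\Sigma^0_1$, yield a hyperarithmetical $\omega$-sequence cofinal in $\omega_1^{CK}$, which is impossible. Thus each $R_n$ is forced to reach into the ill-founded part of $U$ (this is exactly why the $R_n$ must be infinite rather than bounded finite sets), and the boundedness of $R_n\cap I$ must be imposed \emph{indirectly} --- here, by using the attached trees $T_u$ as detectors of membership in $I$ and requiring every element of $R_n$ above the marker $e_n$ to carry a tree of rank $\geq\omega\alpha$ for all computable $\alpha$. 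Checking that all the needed requirements genuinely package as a $\Pi^1_1$ set of computable infinitary sentences, and that the explicit auxiliary structures keep satisfying every hyperarithmetical fragment, is the heart of the argument.
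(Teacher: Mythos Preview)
Your approach has a genuine gap: the claim that ``$v\in I$'' is expressible by a single computable infinitary sentence is false. The disjunction $\bigvee_{\beta<\omega_1^{CK}}[\,T_v\text{ has tree rank }\leq\beta\,]$ ranges over \emph{all} computable ordinals, and the set of notations for computable ordinals is $\Pi^1_1$-complete, not c.e.; so this is not a disjunction of the kind permitted in $L_{\omega_1^{CK}}$. More decisively, $I$ is the well-founded part of a Harrison ordering and is therefore $\Pi^1_1$-complete, while any relation defined in a computable structure by a computable infinitary formula is hyperarithmetical. Hence no computable infinitary formula can define $I$. This breaks your axioms~(2) and~(4), which both rely on ``$v\in I$'' as a subformula. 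Without those axioms you cannot force $e_n\in I$, and then your clause~(3) no longer bounds $R_n\cap I$ inside $I$. There is also no evident repair: as you yourself note, the $e_n$ must be cofinal in $I$, so you cannot replace ``$e_n\in I$'' by ``$T_{e_n}$ has tree rank $\leq\beta_n$'' for some fixed computable $\beta_n$ chosen in advance, since a computable sequence $(\beta_n)$ cofinal in $\omega_1^{CK}$ does not exist.

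The paper avoids compactness entirely here and gives a direct, elementary construction from the structure $\mathcal{A}$ already built. Fix once and for all elements $u_0\notin I$ and $u_1\in I$ (this is non-uniform but harmless: we only need two specific parameters). Set $R_0=\{u:u\geq_U u_0\}\cup\{u_1\}$, and let $R_{n+1}$ consist of the $S_U$-successors of the elements of $R_n$ together with the $\omega$-least element of $U$ not yet in $\bigcup_{m\leq n}R_m$. Then each $R_n$ is computable uniformly in $n$; $R_n\cap I$ is finite (one checks by induction that it grows by at most one successor and one new element per stage), giving~(2); properties~(3) and~(4) hold because $S_U(u)>u$ and $S_U$ preserves membership in $I$; and~(5) holds because the ``new element'' clause eventually absorbs every element of $U$. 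Your diagnosis in the final paragraph --- that each $R_n$ must spill into the ill-founded part --- is exactly right, and the paper's construction realizes this in the simplest way: each $R_n$ contains an entire tail $[u_0,\infty)$ of the ill-founded part.
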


\begin{proof}

Fix $u_0\notin I$, $u_1\in I$.  Let $R_0$ consist of all elements of $u \geq u_0$, plus $u_1$.  Given $R_m$ for $m\leq n$, let 
$R_{n+1}$ consist of the successors in $U$ of each $u\in R_n$, plus the $\omega$-first element of $U$ not in 
$\cup_{m\leq n} R_m$.    
\end{proof}

Let $T$ be the tree of finite sequences $(u_1,\ldots,u_n)$ that are increasing in $(U,<_U)$, with $u_n \in R_n$.  We define a function $H$ that takes each non-empty sequence $\sigma\in T$ to its last term $u_n$.  This function is obviously computable.  We are about to describe the structure $\mathcal{M}$.  The language includes the following:

\begin{enumerate}

\item  unary predicates $A$ and $B$---these will be disjoint,

\item  for each $\tau\in T$, a binary relation $C_\tau$ that associates to each $x \in A$ a subset $T_{(\tau,x)}$ of $B$, where for distinct pairs 
$(\tau,x)$ and $(\tau',x')$, the sets $T_{(\tau,x)}$ and $T_{(\tau',x')}$ are disjoint, 

\item  a binary successor relation $S$ that puts a tree structure on each set $T_{(\tau,x)}$.

\end{enumerate}         

For the structure $\mathcal{M}$, we let $A^\mathcal{M}$ consist of (codes for) the elements of $T$.  For each $\sigma\in A$ and 
$\tau\in T$, we define $S^\mathcal{M}$ so that
\[T_{(\tau,\sigma)}
\cong
\left\{
\begin{array}{ll}
T_u & \mbox{if $\tau\preceq\sigma\ \&\ H(\tau) = u$}\\
T^* & \mbox{if $\tau\not\preceq\sigma$}
\end{array}\right. \]

The structure $\mathcal{M}$ is computable.  We note that for $u\in T$, if $pred(u)$ is well ordered, then $T_u$ is isomorphic to the appropriate $T^a$, while if $pred(u)$ is not well ordered, then $T_u$ is isomorphic to $T^*$.  

\bigskip

To show that the computable infinitary theory of $\mathcal{M}$ is not $\aleph_0$-categorical, we produce a second model 
$\mathcal{N}$, not isomorphic to $\mathcal{M}$.  We want a path through $T$ with special features.

\begin{lem}

There is a path $\pi$ through $T$ such that for all $n$, $\pi(n)\in R_n$, and $ran(\pi)$ is co-final in $I$.  

\end{lem}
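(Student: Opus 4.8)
The plan is to construct $\pi$ by a finite‑extension argument carried out inside the subtree $T_I\subseteq T$ consisting of the finite increasing sequences all of whose entries lie in $I$. First one observes that any path witnessing the conclusion must lie entirely in $T_I$: a path through $T$ is $<_U$‑increasing and $I$ is an initial segment of $U$, so once some $\pi(n)\notin I$ the whole tail of $\pi$ lies outside $I$ and $ran(\pi)$ cannot be cofinal in $I$. Thus it suffices to build an infinite branch of $T_I$ whose range is unbounded in $I$. Since $I$ has order type $\omega_1^{CK}$ it is countable, so fix an enumeration $I=\{w_k:k\in\omega\}$; the construction will ensure that for every $k$ some entry of $\pi$ exceeds $w_k$, which is exactly cofinality.

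The key point is the following extension claim: for every $\sigma\in T_I$ and every $w\in I$ there is a proper extension $\sigma'\in T_I$ of $\sigma$ whose last entry is greater than $w$. Granting this, the lemma follows at once: using clause (1) of the lemma defining $(R_n)$, choose a node $\sigma_0\in T_I$ having an entry in $I$ to seed the construction; given $\sigma_k$, apply the extension claim with $w=w_k$ to obtain $\sigma_{k+1}\supsetneq\sigma_k$ in $T_I$ whose last entry exceeds $w_k$; then $\pi=\bigcup_k\sigma_k$ is an infinite branch of $T_I\subseteq T$, so $\pi(n)\in R_n$ for all $n$, $ran(\pi)\subseteq I$, and $ran(\pi)$ is cofinal in $I$ because it passes above every $w_k$.

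To prove the extension claim I would argue by contradiction. Fix $\sigma\in T_I$ of length $m$. By clause (4), every node of $T_I$ can be extended to the next level while staying in $I$ and strictly increasing, so for each $N\ge m$ there are nodes of $T_I$ of length $N$ extending $\sigma$; call an element of $I$ \emph{$\sigma$‑reachable} if it occurs as the last entry of such an extension. Suppose the set of $\sigma$‑reachable elements were bounded in $I$, with supremum $\beta<\sup I$. Clause (2) says each $R_n\cap I$ is bounded in $I$, and clause (5) says $\bigcup_n(R_n\cap I)$ is unbounded in $I$; together these force infinitely many $N$ for which $R_N\cap I$ contains an element exceeding $\beta$. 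Fix such an $N>m$ together with $z\in R_N\cap I$ satisfying $z>\beta$, and fix a $\sigma$‑reachable $y$ at level $N-1$, witnessed by some $\tau\in T_I$ extending $\sigma$ with last entry $y$. Since $y\le\beta<z$, appending $z$ to $\tau$ yields an increasing sequence still lying in $T_I$ and extending $\sigma$, so $z$ is $\sigma$‑reachable — contradicting $z>\beta$. Hence the $\sigma$‑reachable elements are cofinal in $I$, so some extension of $\sigma$ in $T_I$ has last entry above $w$; extending it once more by clause (4) makes it proper.

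The main obstacle is the contradiction step just described, namely the slightly counterintuitive observation that if the reachable part of $T_I$ were bounded by $\beta$, then \emph{every} element of \emph{every} $R_N$ lying above $\beta$ would automatically be reachable, because it can be appended to any path that survives to level $N-1$ (such a path, being reachable, sits at or below $\beta$, hence strictly below the new element). Once that is seen, the rest is routine bookkeeping with clauses (1), (2), (4), (5); I would also want to check carefully the index conventions linking the levels of $T$, the sets $R_n$, and the seed provided by clause (1).
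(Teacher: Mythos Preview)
Your proof is correct and rests on the same idea as the paper's: build $\pi$ inside $I$, using clause~(4) to keep extending and clause~(5) to guarantee that for any target in $I$ some $R_N\cap I$ (hence, via clause~(4), all later ones) contains an element above it which can then be appended. The paper carries this out more directly---it extends $\pi$ one step at a time, greedily passing the next enumerated element $u_k$ of $I$ as soon as $R_{n+1}\cap I$ permits---whereas you package the same observation as a separate extension claim proved by contradiction; this is slightly more roundabout but equally valid.
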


\begin{proof}

Let $(u_n)_{n\in\omega}$ be a list of the elements of $I$.  Let $\pi(0)\in R_0\cap I$.  Given $\pi(n)$, take the first $k$ such that $u_k > \pi(n)$.  We choose $\pi(n+1)$ to be some $v > \pi(n)$ in $R_{n+1}\cap I$.  If possible, we take $v\geq u_k$.  Since $I$ is co-final in $\cup_n R_n$, there is some $m$ such that $R_m\cap I$ has an element $v\geq u_k$, and then the same is true for all $m'\geq m$.  So, for each $k$, we will come to $m$ such that we can choose $\pi(m)\geq u_k$.  
\end{proof}

Let $\mathcal{N}$ be the extension of $\mathcal{M}$ with an additional element of $A^\mathcal{N}$ representing the path $\pi$.  We define $S^\mathcal{N}$ on $T_{(\tau,\pi)}$ so that  
\[T_{(\tau,\pi)}\cong \left\{\begin{array}{ll}
T_u & \mbox{if $H(\tau) = u$ and $\tau\preceq \pi$}\\
T^* & \mbox{otherwise}
\end{array}\right.\]

\begin{lem}

$\mathcal{M}$ and $\mathcal{N}$ are not isomorphic.  

\end{lem}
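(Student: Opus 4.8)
The plan is to read off an isomorphism invariant from the fact that $A$, $B$, each binary relation $C_\tau$, and $S$ all belong to the language: any isomorphism $f\colon\mathcal{M}\to\mathcal{N}$ must carry $A^{\mathcal{M}}$ onto $A^{\mathcal{N}}$ and, for every fixed $\tau\in T$ and every $x\in A^{\mathcal{M}}$, must restrict to an isomorphism of the $S$-tree on $T_{(\tau,x)}$ (as computed in $\mathcal{M}$) onto the $S$-tree on $T_{(\tau,f(x))}$ (as computed in $\mathcal{N}$). So for each $x\in A^{\mathcal{M}}$ the function $\tau\mapsto$ (isomorphism type of $T_{(\tau,x)}$) agrees with the corresponding function for $f(x)$ in $\mathcal{N}$; in particular the cardinality of $\{\tau\in T : T_{(\tau,x)}\not\cong T^{*}\}$ is preserved.

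First I would observe that if $x=\sigma\in A^{\mathcal{M}}$, then $\sigma$ is a \emph{finite} sequence in $T$, and by the definition of $\mathcal{M}$ we have $T_{(\tau,\sigma)}\cong T^{*}$ for every $\tau$ with $\tau\not\preceq\sigma$; since a finite sequence has only finitely many initial segments, $\{\tau\in T : T_{(\tau,\sigma)}\not\cong T^{*}\}$ is finite. Next I would check the opposite behaviour at the new point $\pi\in A^{\mathcal{N}}$: for every nonempty initial segment $\tau$ of $\pi$ we have $\tau\preceq\pi$ and $H(\tau)$ is a term of $\pi$, so by the choice of $\pi$ we have $H(\tau)\in I$; thus $pred(H(\tau))$ is well-ordered of some computable order type, and $T_{(\tau,\pi)}\cong T_{H(\tau)}$ has computable Scott rank (indeed $T_{H(\tau)}\cong T^{a}$ for the corresponding notation $a\in P$). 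Since $T^{*}$ has Scott rank $\omega_1^{CK}$, this gives $T_{(\tau,\pi)}\not\cong T^{*}$, and as $\pi$ has infinitely many nonempty initial segments, $\{\tau\in T : T_{(\tau,\pi)}\not\cong T^{*}\}$ is infinite.

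Combining the two, if $f\colon\mathcal{M}\to\mathcal{N}$ were an isomorphism, then, setting $x=f^{-1}(\pi)\in A^{\mathcal{M}}$, we would have $T_{(\tau,x)}\cong T_{(\tau,\pi)}$ for all $\tau$, so $\{\tau : T_{(\tau,x)}\not\cong T^{*}\}$ would be infinite --- impossible, since $x$ is a finite sequence. I do not expect a genuine obstacle; the two points needing a little care are that the named relations $A$, $C_\tau$, $S$ really do determine the partition of $B$ into the blocks $T_{(\tau,x)}$ together with the tree on each block, up to isomorphism, and the inequality $T^{a}\not\cong T^{*}$ --- this last is exactly where the gap between a computable and a non-computable Scott rank is used, and is the reason it was arranged that every $\pi(n)$ lie in the well-ordered initial segment $I$ (so that the blocks along $\pi$ are honest approximating trees rather than further copies of $T^{*}$).
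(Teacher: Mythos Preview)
Your proposal is correct and follows essentially the same approach as the paper: both argue that for any $\sigma\in A^{\mathcal{M}}$ only finitely many $\tau$ give $T_{(\tau,\sigma)}\not\cong T^{*}$, whereas for $\pi\in A^{\mathcal{N}}$ infinitely many do, so no isomorphism can send an element of $A^{\mathcal{M}}$ to $\pi$. You are simply more explicit than the paper about why the relevant data are preserved under isomorphism and about why $T^{a}\not\cong T^{*}$ (via the Scott-rank gap), but the core invariant and the contradiction are identical.
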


\begin{proof}
For a fixed $\sigma\in T$, there are only finitely many $\tau$ such that $\tau\preceq\sigma$. In $\mathcal{M}$, for a fixed $\sigma$, all but finitely many of the trees $T_{(\tau,\sigma)}$ are isomorphic to $T^*$. On the other hand, in $\mathcal{N}$, there are infinitely many initial segments $\tau$ of $\pi$ with $T_{(\tau,\pi)}$ isomorphic to some $T_u \ncong T^*$. Thus, no element of $\mathcal{M}$ can be mapped isomorphically to $\pi \in \mathcal{N}$.
\end{proof} 

\begin{defn}

We write $\mathcal{A}\preceq_{\infty} \mathcal{B}$ if for any computable infinitary formula $\varphi(\bar{x})$ and any $\bar{a}$ in $\mathcal{A}$, $\mathcal{A}\models\varphi(\bar{a})$ iff $\mathcal{B}\models\varphi(\bar{a})$.

\end{defn}
 
To show that $\mathcal{N}$ satisfies the computable infinitary theory of $\mathcal{M}$, we show that $\mathcal{M}\preceq_{\infty}\mathcal{N}$.  For this, it is enough to show that for any computable ordinal $\alpha$ and any tuple $\bar{a} \in \mathcal{M}$, $(\mathcal{M},\bar{a}) \sim^\alpha (\mathcal{N},\bar{a})$.

\begin{lem}

Let $\mathcal{A}$ and $\mathcal{B}$ be structures, each isomorphic to one of $\mathcal{M}$ or $\mathcal{N}$.  Let $\bar{a} = (a_1,\ldots,a_n)$ be a tuple in 
$\mathcal{A}$, and let $\bar{b} = (b_1,\ldots,b_n)$ be a tuple in $\mathcal{B}$ of the same length. Suppose that:

\begin{enumerate}
\item $\bar{a}$ and $\bar{b}$ satisfy the same atomic formulas, 

\item for each $a_i$ and the corresponding $b_i$ in the predicate $A$ (and with $u$ being the element of $U$ with $pred(u)$ having order type $\alpha$), for each $n$, if one of $a_i(n)$ or $b_i(n)$ is defined and $\leq_U u$, then $a_i(n) = b_i(n)$, and

\item for each $a_i$ and corresponding $b_i$, both in $A$, and for each $\tau \in T$, we have \linebreak
$(T_{(\tau,a_i)},\bar{c}) \sim^\alpha (T_{(\tau,b_i)},\bar{d})$, where $\bar{c}$ consists of the elements from $\bar{a}$ that are in $T_{(\tau,a_i)}$, and $\bar{d}$ consists of the corresponding elements from $\bar{b}$.  

\end{enumerate}
(We assume that for each element $a$ of the tuple $\bar{a}$ that is in the predicate $B$, the corresponding element $a'$ of the predicate $A$ with $a \in T_{(\tau,a')}$ is also present in the tuple $\bar{a}$. We make a similar assumption about $\bar{b}$.) Then $(\mathcal{A},\bar{a}) \sim^\alpha (\mathcal{B},\bar{b})$.

\end{lem}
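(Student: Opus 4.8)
The plan is to prove the lemma by induction on $\alpha$, uniformly for all choices of $\mathcal{A},\mathcal{B}$ among the copies of $\mathcal{M}$ and $\mathcal{N}$. The base case $\alpha=0$ is exactly hypothesis (1). When $\alpha$ is a limit, $(\mathcal{A},\bar{a})\sim^\alpha(\mathcal{B},\bar{b})$ amounts to $(\mathcal{A},\bar{a})\sim^\beta(\mathcal{B},\bar{b})$ for all $\beta<\alpha$, and hypotheses (1)--(3) at level $\alpha$ imply the corresponding hypotheses at each $\beta<\alpha$: (1) is unchanged; (3) follows from the downward monotonicity $\sim^\alpha\Rightarrow\sim^\beta$; and for (2), writing $u_\gamma$ for the element of $U$ with $pred(u_\gamma)$ of order type $\gamma$, we have $u_\beta<_U u_\alpha$, so a coordinate that is $\le_U u_\beta$ is $\le_U u_\alpha$ and is already forced to agree. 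Thus the induction hypothesis handles the limit case, and it remains to treat the successor step.

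So assume hypotheses (1)--(3) hold at level $\alpha+1$. By the previous paragraph together with the induction hypothesis applied at each $\beta\le\alpha$, it suffices to establish the back-and-forth condition at level $\alpha$: for every tuple $\bar{c}$ in $\mathcal{A}$ there is a tuple $\bar{d}$ in $\mathcal{B}$ for which $(\bar{a}\bar{c},\bar{b}\bar{d})$ satisfies (1)--(3) at level $\alpha$; the ``back'' direction is the mirror image, since (1)--(3) are symmetric in the two structures. Enlarging $\bar{c}$ if necessary so that $\bar{a}\bar{c}$ is closed in the sense of the statement, we build $\bar{d}$ by pairing off the new elements one at a time, always treating an element of $A$ before the elements of $B$ lying in the trees $T_{(\tau,\cdot)}$ attached to it; since distinct elements of $\bar{c}$ in $A$, or in disjoint trees, automatically receive distinct partners and $\sim^\alpha$ respects equality atoms, the resulting $\bar{d}$ will carry the correct atomic type apart from the choices made below.

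There are three kinds of new element. If a new element $c$ of $A$ is an honest node of $T$ (or, when $\mathcal{B}$ is a copy of $\mathcal{N}$, is the path element of $\mathcal{B}$), put $d=c$: the attached trees coincide up to the evident isomorphism, so (2) and (3) hold, and (1) holds because $c$ and $d$ lie in the same relations $C_\tau$. If a new element $c$ lies in $B$, say $c\in T_{(\tau,a')}$, then its parent $a'$ has already been paired with some $d'$ for which $(T_{(\tau,a')},\ldots)\sim^{\alpha+1}(T_{(\tau,d')},\ldots)$ --- this is hypothesis (3) at level $\alpha+1$ when $a'$ was in $\bar{a}$, and is arranged in the third case below when $a'$ is newly paired --- so the defining property of $\sim^{\alpha+1}$ lets us choose $d\in T_{(\tau,d')}$ with $(T_{(\tau,a')},\ldots,c)\sim^\alpha(T_{(\tau,d')},\ldots,d)$, giving (1)--(3) at level $\alpha$. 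The delicate case is a new element $c$ equal to the path $\pi$ of $\mathcal{A}$ with $\mathcal{B}$ a copy of $\mathcal{M}$. Since $u_\alpha\in I$ and $ran(\pi)$ is cofinal in $I$, let $j$ be largest with $\pi(j)\le_U u_\alpha$ and set $d=(\pi(0),\ldots,\pi(j),w)$, where $w\in R_{j+1}$ is chosen outside $I$; there are infinitely many such $w$, so $d$ can be kept distinct from all previously chosen partners. Hypothesis (2) holds because $d$ agrees with $\pi$ on every coordinate that is $\le_U u_\alpha$, its extra coordinate $w$ being $>_U u_\alpha$. For (3) we run through the finitely many relevant $\tau$: if $\tau$ is an initial segment of $(\pi(0),\ldots,\pi(j))$, then $T_{(\tau,\pi)}$ and $T_{(\tau,d)}$ are isomorphic copies of $T_{H(\tau)}$; if $\tau=d$, then $T_{(d,d)}\cong T^*$ (as $w\notin I$) and $T_{(d,\pi)}=T^*$ (as $d\not\preceq\pi$); if $\tau\preceq\pi$ is longer and $\ne d$, then $H(\tau)>_U u_\alpha$, so either $pred(H(\tau))$ is not well ordered and $T_{H(\tau)}\cong T^*$, or it has order type $\gamma\ge\alpha+1$ and $T_{H(\tau)}\cong T^a$ with $|a|=\gamma$, whence $T_{H(\tau)}\sim^{\alpha+1}T^*$ from $T^*\sim^{\gamma}T^a$ and monotonicity; in the remaining cases both trees are $T^*$. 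In every case $T_{(\tau,\pi)}\sim^{\alpha+1}T_{(\tau,d)}$, which suffices to pair off any new $B$-elements of $\bar{c}$ lying in these trees at level $\alpha$ and so secure (3); and (1) holds because $\pi$ and $d$ both lie in $A$ with the same $C_\tau$-pattern on the old $B$-elements of $\bar{a}$.

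I expect the crux to be this last case: choosing one node $d$ of $T$ that respects hypothesis (2) (which constrains $d$ only below $u_\alpha$), that preserves hypothesis (3) --- here the cofinality of $ran(\pi)$ in $I$ is exactly what forces every tree attached above the threshold $u_\alpha$ to be $\sim^{\alpha+1}$ to $T^*$ --- and that can simultaneously be kept distinct from the finitely many partners already committed to. Everything else is routine bookkeeping together with the two facts recalled at the start of the section: the downward monotonicity of the relations $\sim^\beta$, and $T^*\sim^{|a|}T^a$ for $a\in P$.
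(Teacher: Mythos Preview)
Your overall strategy matches the paper's: induction on $\alpha$, handling new $A$-elements first (and dealing with the path $\pi$ by truncating below $u_\alpha$ and appending a coordinate in the non-well-founded part of $U$), then handling new $B$-elements tree-by-tree via the $\sim$-relations supplied by hypothesis~(3). Your explicit limit/successor split is a harmless stylistic variation on the paper's direct treatment of arbitrary $\beta<\alpha$.

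There is, however, a genuine gap in your first case. You write ``put $d=c$'' whenever $c$ is an honest node of $T$, but $c$ may already occur in $\bar{b}$ as some $b_j$ whose partner $a_j\neq c$. Then $\bar{b},\bar{d}$ satisfies the equality $x_j=x_{n+1}$ while $\bar{a},\bar{c}$ does not, so condition~(1) fails. (Concretely: take $\mathcal{A}=\mathcal{B}=\mathcal{M}$, $\bar{a}=(\sigma)$, $\bar{b}=(\sigma')$ with $\sigma\neq\sigma'$ satisfying (1)--(3), and $c=\sigma'$.) You clearly saw this distinctness issue in your third case, where you explicitly keep $d$ ``distinct from all previously chosen partners''; you just did not carry it back to the first case. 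The paper's fix, which also works here, is to replace $d=c$ by $d=c\,{}^\frown\langle v\rangle$ for $v\in R_{|c|+1}$ with $pred(v)$ ill-founded and $v$ large enough to avoid collisions: every $\tau\preceq c$ is still $\preceq d$, the one new $\tau=d$ has $T_{(d,d)}\cong T_v\cong T^*$, and (2),(3) go through exactly as before. The same remark applies to your parenthetical about mapping $\pi$ to $\pi$ when $\mathcal{B}$ is also a copy of $\mathcal{N}$: that target may already be occupied in $\bar{b}$, and one should instead use the truncate-and-extend construction from your third case.

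A minor technical point in your third case: you take $j$ to be the largest index with $\pi(j)\le_U u_\alpha$, but for small $\alpha$ there may be no such $j$ (nothing forces $\pi(0)\le_U u_\alpha$). The paper sidesteps this by allowing the truncated initial segment to be empty; your argument goes through unchanged once you allow $d=\langle w\rangle$ with $w\in R_1$ in that situation.
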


Note that if $\bar{a}$ and $\bar{b}$ both consist solely of elements from the predicate $A$, then (1) and (2) imply (3).

\begin{proof}

We argue by induction on $\alpha$. Suppose that $\bar{a}$ and $\bar{b}$ satisfy the conditions above for $\alpha$. Given $\beta < \alpha$ and $\bar{a}'$ a tuple in $\mathcal{A}$, we will find a tuple $\bar{b}'$ in $\mathcal{B}$ such that $\bar{a},\bar{a}'$ and $\bar{b},\bar{b}'$ satisfy the conditions above for $\beta$. It suffices to assume about $\bar{a}'$ that for each element $a$ of the tuple $\bar{a}'$ that is in the predicate $B$, the corresponding element $a'$ of the predicate $A$ with $a \in T_{(\tau,a')}$ is present in the tuple $\bar{a},\bar{a}'$.

First, for each $a_i' \in A$, we choose $b_i'$ such that for all $\tau \in T$, $T_{(\tau,a_i')} \sim^\alpha T_{(\tau,b_i')}$. If $a_i' \in T$, then we choose $b_i' = a_i'$, if possible.  However, it may be that $a_i'$ is already in $\bar{b}$.  So, instead, let $n$ be the length of $a_i'$, and choose $b_i' = a_i' \string^ \langle v \rangle$ for some sufficiently large $v \in R_n$ with $pred(v)$ ill-founded. For each 
$\tau$, if $\tau \preceq a_i'$, then $\tau \preceq b_i'$, so that $T_{(\tau,a_i')} \cong T_u \cong T_{(\tau,b_i')}$ for some $u$.  If $\tau \npreceq a_i'$, then either $\tau \npreceq b_i'$ or $\tau = b_i'$ (whence $H(\tau) = v$).  Either way, $T_{(\tau,a_i')} \cong T^* \cong T_{(\tau,b_i')}$ by choice of $v$.

If, instead, $a_i' = \pi$, let $u \in U$ be such that $pred(u)$ is well-founded with order type $\alpha$. Let $\sigma$ be the initial segment of $\pi$ consisting of all of the entries $v$ of $\pi$ with $v \leq_U u$. Let $b_i'$ be a code for $\sigma \string^ \langle v \rangle$ for some $v \in R_n$ with $pred(v)$ ill-founded, which is sufficiently large that $b_i'$ codes a new element. Then for all $\tau \preceq \sigma$, we have $\tau \preceq \pi$ and so $T_{(\tau,a_i')} \cong T_{(\tau,b_i')}$. For all $\tau \npreceq \sigma$, either $\tau \npreceq \pi$, in which case $T_{(\tau,a_i')} \cong T^* \cong T_{(\tau,b_i')}$, or $\tau \preceq \pi$, in which case, $T_{(\tau,a_i')} \cong T_v \sim^\alpha T^* \cong T_{(\tau,b_i')}$ for some $v >_U u$. In this manner, we may reduce to the case where $\bar{a}_i'$ contains only elements of the predicate $B$. 

For each element $a$ from $\bar{a}$ in the predicate $A$, let $b$ be the corresponding element from $\bar{b}$. Fix $\tau \in T$. Let $\bar{c}$ consist of the elements from $\bar{a}$, and let $\bar{c}'$ consist of the elements from $\bar{a}'$ that are in $T_{(\tau,a)}$.  Similarly, let $\bar{d}$ consist of the elements from $\bar{b}$ that are in $T_{(\tau,b)}$. By assumption, $(T_{(\tau,a)},\bar{c}) \sim^\alpha (T_{(\tau,b)},\bar{d})$. Thus, there is $\bar{d}'$ such that $(T_{(\tau,a)},\bar{c},\bar{c}') \sim^\beta (T_{(\tau,b)},\bar{d},\bar{d}')$. The tuple $\bar{b}'$ consists of the elements of the tuples $\bar{d}'$ for each $a$ and $\tau$.
\end{proof}

\begin{lem}

$\mathcal{M}\preceq_\infty \mathcal{N}$  

\end{lem}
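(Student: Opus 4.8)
The plan is to deduce this directly from the preceding back-and-forth lemma, applied with $\mathcal{A} = \mathcal{M}$, $\mathcal{B} = \mathcal{N}$, and $\bar{b} = \bar{a}$. First I would recall the reduction already noted: it suffices to show $(\mathcal{M},\bar{a}) \sim^\alpha (\mathcal{N},\bar{a})$ for every computable ordinal $\alpha$ and every tuple $\bar{a}$ in $\mathcal{M}$, since by the Remark following the definition of $\sim^\alpha$ this forces $\mathcal{M}$ and $\mathcal{N}$ to agree on every $\Sigma_\alpha$ formula of $L_{\omega_1\omega}$ with parameters $\bar{a}$, hence on every $\Pi_\alpha$ formula as well, and every computable infinitary formula is a computable $\Sigma_\alpha$ or $\Pi_\alpha$ formula for some computable $\alpha$.

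Next I would fix a computable $\alpha$ and a tuple $\bar{a}$ in $\mathcal{M}$. Since $\sim^\alpha$ is preserved when coordinates are deleted, I may first enlarge $\bar{a}$ so that whenever an element of $\bar{a}$ lies in $B$, the element $a'$ of $A$ with that element in $T_{(\tau,a')}$ is also in $\bar{a}$; the enlarged tuple is still a tuple in $\mathcal{M}$, and proving the claim for it gives the claim for the original $\bar{a}$. Then I would check the three hypotheses of the lemma with $\bar{b} = \bar{a}$. Hypothesis (1) holds because $\mathcal{M}$ is a substructure of $\mathcal{N}$: the universe of $\mathcal{N}$ extends that of $\mathcal{M}$ only by $\pi$ together with the $B$-elements of the trees $T_{(\tau,\pi)}$, and the interpretations of $A$, $B$, the relations $C_\tau$, and $S$ agree on the universe of $\mathcal{M}$, so $\bar{a}$ satisfies the same atomic formulas in the two structures. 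Hypothesis (2) is immediate since $\bar{b} = \bar{a}$. For hypothesis (3), note that every element of $\bar{a}$ in $A$ is an element of $T$, hence not equal to $\pi$, since $\bar{a}$ is a tuple in $\mathcal{M}$; therefore for each such $a_i$ and each $\tau \in T$ the tree $T_{(\tau,a_i)}$, together with the elements of $\bar{a}$ lying in it, is literally the same in $\mathcal{M}$ and in $\mathcal{N}$, so $(T_{(\tau,a_i)},\bar{c}) \sim^\alpha (T_{(\tau,a_i)},\bar{c})$ by reflexivity of $\sim^\alpha$. The lemma then yields $(\mathcal{M},\bar{a}) \sim^\alpha (\mathcal{N},\bar{a})$, completing the argument.

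I do not expect any real obstacle: once the big back-and-forth lemma is available, all of its hypotheses degenerate when the two tuples coincide. The only steps that need a sentence of care are the routine bookkeeping — that $\sim^\alpha$ is preserved under deleting coordinates, that $\mathcal{M}$ sits inside $\mathcal{N}$ with atomic formulas absolute, and the passage from ``$(\mathcal{M},\bar{a}) \sim^\alpha (\mathcal{N},\bar{a})$ for all computable $\alpha$'' to ``$\mathcal{M} \preceq_\infty \mathcal{N}$'' via the syntactic classification of computable infinitary formulas.
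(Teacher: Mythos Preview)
Your proposal is correct and follows exactly the same approach as the paper: apply the preceding back-and-forth lemma with $\mathcal{A}=\mathcal{M}$, $\mathcal{B}=\mathcal{N}$, and $\bar{b}=\bar{a}$ to obtain $(\mathcal{M},\bar{a})\sim^\alpha(\mathcal{N},\bar{a})$ for every computable $\alpha$, then pass to $\mathcal{M}\preceq_\infty\mathcal{N}$ via the Remark. The paper's proof is terser, simply invoking ``the previous lemma'' without spelling out the verification of its hypotheses or the enlargement of $\bar{a}$; your version fills in exactly those routine details but is otherwise identical in strategy.
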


\begin{proof}

Let $\bar{a}$ be a tuple in $\mathcal{M}$. Then by the previous lemma, $(\mathcal{M},\bar{a})\sim^\alpha (\mathcal{N},\bar{a})$.  It follows that for any $\Sigma_\alpha$ formula $\varphi(\bar{x})$, if $\mathcal{M}\models\varphi(\bar{a})$, then $\mathcal{N}\models\varphi(\bar{a})$.  This proves the lemma.          
\end{proof}

\begin{lem}

$\mathcal{M}$ has Scott rank $\omega_1^{CK}$.

\end{lem}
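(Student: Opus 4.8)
The plan is to prove two things about $\mathcal{M}$ and then read off the conclusion from the ``Fact'' in the introduction together with Nadel's bound. First: every tuple in $\mathcal{M}$ has its orbit defined by a computable infinitary formula — by part (3) of the Fact this excludes Scott rank $\omega_1^{CK}+1$, so the rank is at most $\omega_1^{CK}$. Second: for every computable ordinal $\alpha$ there is a tuple whose orbit is not defined by any computable $\Sigma_\alpha$ formula — by part (1) of the Fact this excludes computable Scott rank. Hence the rank is exactly $\omega_1^{CK}$.

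The second point uses the copies of $T^*$ inside $\mathcal{M}$: for any $\sigma\in A^{\mathcal{M}}$ and any $\tau\in T$ with $\tau\not\preceq\sigma$ we have $T_{(\tau,\sigma)}\cong T^*$. Fix a computable $\alpha\ge 1$. Since $T^*$ has Scott rank $\omega_1^{CK}$, there are tuples $\bar c\sim^\alpha\bar c'$ in $T^*$ that are not automorphic in $T^*$; transport them into $T_{(\tau,\sigma)}$. Applying the ``big'' back-and-forth lemma above with $\mathcal{A}=\mathcal{B}=\mathcal{M}$ to $(\sigma,\bar c)$ and $(\sigma,\bar c')$ gives $(\mathcal{M},\sigma,\bar c)\sim^\alpha(\mathcal{M},\sigma,\bar c')$: conditions (1) and (2) are immediate, and condition (3) reduces, for the one coordinate $\tau'=\tau$, to $(T^*,\bar c)\sim^\alpha(T^*,\bar c')$ and is trivial otherwise. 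If the orbit of $(\sigma,\bar c)$ were defined by a computable $\Sigma_\alpha$ formula then $(\sigma,\bar c')$ would lie in it, so some automorphism $g$ of $\mathcal{M}$ would fix $\sigma$ and carry $\bar c$ to $\bar c'$; as $C_\tau$ is a relation of $\mathcal{M}$, $g$ fixes $T_{(\tau,\sigma)}$ setwise and so restricts to an automorphism of $T^*$ taking $\bar c$ to $\bar c'$, a contradiction. Since $\alpha$ was arbitrary, the second point holds.

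For the first point I would, given a tuple $\bar a$ (arranged as in the convention of the big lemma, so that the $A$-parent of each $B$-entry occurs in $\bar a$), exhibit a computable infinitary $\varphi_{\bar a}$ defining the orbit. It is the conjunction of: the atomic type of $\bar a$ (finitary); for each $A$-entry $\sigma_j$ and each $\rho\preceq\sigma_j$ with $H(\rho)\in I$, a computable Scott-type formula saying that the $C_\rho$-fibre of the relevant variable is isomorphic to the appropriate $T^a$ (computable, since $T^a$ has a computable Scott sentence); for each of the finitely many $\tau$ carrying a $B$-entry of $\bar a$ in $T_{(\tau,\sigma_j)}$, the relativization to $\{y:C_\tau(x_j,y)\}$ of a computable infinitary formula defining the orbit of that subtuple inside the tree $T_{(\tau,\sigma_j)}$ (which is $T^a$ or $T^*$, and in either case every tuple has a computable-infinitary-definable orbit); and — the delicate clause — for each $\sigma_j$, a formula asserting that $x_j$ has ``no further $I$-entries'': $\bigwedge_\tau$(``the $C_\tau$-fibre of $x_j$ has tree rank $\ge\gamma_n$''), where $\sigma_j\!\restriction\! n$ is the maximal initial segment of $\sigma_j$ all of whose terms lie in $I$, the conjunction ranges over all $\tau$ with $\sigma_j\!\restriction\! n\prec\tau$ and $|\tau|=n+1$, and $\gamma_n$ is a fixed computable ordinal exceeding the tree ranks of all trees $T_{H(\tau)}$ arising from such $\tau$ with $H(\tau)\in I$. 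Such a $\gamma_n$ exists because property (2) of $(R_n)$ bounds the relevant $R_m\cap I$ in $I$; and since such a fibre $T_{(\tau,x_j)}$ is either one of those bounded-rank trees $T_{H(\tau)}$ (tree rank $<\gamma_n$) or a copy of $T^*$ (tree rank $\ge\gamma_n$), the clause says exactly that no length-$(n{+}1)$ initial segment of $x_j$ has its last term in $I$. The bound $\gamma_n$ is uniform over this (c.e.-indexed) conjunction, so the clause — and hence $\varphi_{\bar a}$, a finite conjunction of such pieces — is computable infinitary.

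Finally one checks that $\mathcal{M}\models\varphi_{\bar a}(\bar b)$ iff $\bar b$ is in the orbit of $\bar a$. One direction is clear, since $\varphi_{\bar a}$ is built from the relations of $\mathcal{M}$. For the other, $\varphi_{\bar a}(\bar b)$ forces $\bar b$ to have the same atomic type, each of its $A$-entries to have the same maximal $I$-initial-segment and the same fibres over it as the matching entry of $\bar a$, each relevant $B$-subtuple to occupy the matching orbit in its tree, and $\bar b$ to have no further $I$-entries; one then assembles an automorphism by extending the partial matching of $A$-entries to a bijection of $A^{\mathcal{M}}=T$ preserving the invariant $\{\rho\preceq x:H(\rho)\in I\}$ (possible, since that invariant depends only on the maximal $I$-initial-segment and each of its classes is infinite), and then choosing, fibre by fibre, isomorphisms $T_{(\tau,x)}\cong T_{(\tau,\rho(x))}$ that on the listed $A$-entries match up the designated $B$-subtuples — feasible precisely because those data have been pinned down. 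I expect the first point, and inside it the ``no further $I$-entries'' clause, to be the main obstacle: the naive assertion ``this fibre is $\cong T^*$'' is not computably infinitary, and it is exactly property (2) of $(R_n)$ that converts the requirement into one of bounded, hence computable, complexity.
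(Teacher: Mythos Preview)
Your proof is correct and follows essentially the same approach as the paper's. Both establish the lower bound by exploiting the embedded copies of $T^*$ (you are more explicit, invoking the back-and-forth lemma to lift $\sim^\alpha$ from the tree to $\mathcal{M}$, whereas the paper simply asserts that Scott ranks do not decrease under this embedding), and both establish the upper bound by writing down a computable infinitary orbit-definition whose only delicate clause---``the remaining fibres are $\cong T^*$''---is rendered computable via property~(2) of $(R_n)$. The bookkeeping differs slightly: you truncate at the length of the maximal $I$-initial segment and phrase the clause as ``tree rank $\ge\gamma_n$'', while the paper truncates at $|\sigma|+1$ and phrases it as ``$\sim^\alpha T^*$''; but these are equivalent formulations of the same idea.
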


\begin{proof} 

First, note that there is an automorphism of $\mathcal{M}$ taking $\sigma_1 \in A$ to $\sigma_2 \in A$ if and only if for each $\tau \in T$, $T_{(\tau,\sigma_1)}$ is isomorphic to $T_{(\tau,\sigma_2)}$. This is the case if and only if for each $n$, if $pred(\sigma_1(n))$ or $pred(\sigma_2(n))$ is well-founded, then $\sigma_1(n) = \sigma_2(n)$.

Fix $\sigma \in A$.  We define the orbit of $\sigma$ by saying, for the finitely many $\tau \preceq \sigma$ with $H(\tau) = u$ and $pred(u)$ well-founded, that $T_{(\tau,\sigma)}$ is isomorphic to $T_u$, and for each other $\tau$, that $T_{(\tau,\sigma)}\cong T^*$. We can express the former by a computable formula using the Scott sentences of the $T_u$. For the latter, note that it suffices to say that $T_{(\tau,\sigma)}$ is isomorphic to $T^*$ for only those $\tau$ of length at most $n+1$, where $n$ is the length of $\sigma$.  We cannot express this directly by a computable infinitary formula, but there is a computable infinitary formula that is satisfied in $\mathcal{M}$ exactly by such elements of $A$. Let $\alpha < \omega_1^{CK}$ be large enough that for all elements $v$ of $R_0,\ldots,R_n$ with $pred(v)$ well-founded (recalling that such $v$ are not cofinal in the initial segment of $U$ of order type $\omega_1^{CK}$), $T_v \nsim^\alpha T^*$. Then, for $\tau$ of length at most $n+1$, $T_{(\tau,\sigma)}$ is isomorphic to $T^*$ if and only if $T_{(\tau,\sigma)} \sim^\alpha T^*$. This can be expressed by a computable infinitary formula.

The Scott rank of a tuple $\bar{b}$ in $T_{(\tau,\sigma)}$ is not greater than the Scott rank of $\bar{b}$ in $\mathcal{M}$.  Therefore, the Scott rank of $\mathcal{M}$ is at least $\omega_1^{CK}$, since there are many $\tau$ and $\sigma$ such that $T_{\tau,\sigma}$ is isomorphic to $T^*$, which has Scott rank $\omega_1^{CK}$.
The Scott rank of $\mathcal{M}$ is at most $\omega_1^{CK}$, since we can define the orbit of any tuple by a computable infinitary formula. For a tuple $\bar{u},\bar{v}$ in $\mathcal{M}$, where $\bar{u}\in A$ and $\bar{v}\in B$, we can define the orbit as follows: for each element $\sigma\in A$ in the tuple $\bar{u}$, we give a definition as above, and if $\bar{b}$ is the part of the tuple $\bar{v}$ in a particular tree $T_{(\tau,\sigma)}$, then we say what is the orbit of $\bar{b}$ in $T_{(\tau,\sigma)}$. Here we use the fact that each of the trees $T_{(\tau,\sigma)}$ itself has Scott rank at most $\omega_1^{CK}$.     
\end{proof}

\section{\texorpdfstring{Scott rank $\omega_1^{CK}+1$}{Scott rank w1CK + 1}}

We begin this section by proving the following:

\begin{thm}
\label{thm:indis-triple}

There is a computable structure of Scott rank $\omega_1^{CK}  + 1$ with no indiscernible ordered triple.

\end{thm}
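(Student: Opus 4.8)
The plan is to build the structure as a Fra\"{i}ss\'{e}-type limit carrying a copy of the Harrison ordering, as sketched in the introduction. Start with the Harrison linear order $(H,<_H)$ of type $\omega_1^{CK}(1+\eta)$, which is computable and has Scott rank $\omega_1^{CK}+1$. On top of it, place a structure with infinitely many infinite equivalence classes, one for each point of $H$ (or, more carefully, take the Fra\"{i}ss\'{e} limit of the class of finite structures with an equivalence relation having all classes of bounded but growing size, then impose the Harrison order on the quotient). Concretely, the universe will be $\bigsqcup_{h\in H} E_h$ with each $E_h$ a countably infinite equivalence class, a relation recording which class a point belongs to, and a relation pulling back $<_H$ to compare classes; the automorphism group is then the full symmetric group on each class, acting over the rigid skeleton $H$. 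First I would check this structure is computable: $H$ is computable, the classes can be taken to be the columns of $\omega\times\omega$, and all relations are then computable.

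Next I would compute the Scott rank. Since the quotient by the equivalence relation is (interpreted as) the Harrison ordering, and the Harrison ordering has Scott rank $\omega_1^{CK}+1$ because it has a descending-sequence-coding element whose orbit is not computable-infinitary-definable, one shows the same element (lifted to the quotient) still has non-definable orbit in the new structure: adding symmetric group fibers over a rigid base cannot lower complexity, and an easy back-and-forth argument using Lemma (the $\approx^\alpha$-to-$\sim^\alpha$ transfer is for trees, so here I would instead run a direct $\sim^\alpha$-argument matching $H$-skeletons and then extending the symmetric-group part freely) gives that every $\alpha$-equivalence that respects the skeleton is realized, so orbits of tuples reduce to orbits of their skeletons in $H$. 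Hence the Scott rank is exactly $\omega_1^{CK}+1$, inherited from $H$. This is the part that needs the most care, but it is essentially the standard computation for the Harrison ordering relativized to the fact that the fibers contribute nothing beyond finitely much atomic information (which class, and the partition of the tuple into classes).

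The core new point — no indiscernible ordered triple — I would prove as follows. Suppose $a_0, a_1, a_2$ were an indiscernible triple. If two of them lie in the same equivalence class $E_h$, then the pair $(a_0,a_1)$ (say, same class) and the pair $(a_0,a_2)$ (say, different classes, if $a_2\notin E_h$) already satisfy different atomic formulas, contradicting indiscernibility; and if all three lie in the same class $E_h$ they are fine with respect to that relation, but then their images under the interpreted quotient all equal the single point $h\in H$, and I claim no single point of $H$ can support a $3$-element set of "$E_h$-internal indiscernibles" interacting correctly — actually the cleanest route is: the images $h_0 = \mathrm{cl}(a_0), h_1, h_2$ in $H$ must themselves form an indiscernible triple in the quotient structure, or be all equal. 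If all distinct, $h_0 < h_1 < h_2$ in some order, but then $(h_0,h_1)$ and $(h_1,h_2)$ versus $(h_0,h_2)$: in a linear order an indiscernible triple forces $(h_0,h_1)\equiv (h_0,h_2)$ in $L_{\omega_1\omega}$, which fails since the Harrison order is a Scott-rank-$(\omega_1^{CK}+1)$ linear order and in particular any two points with different "order-type-below-the-gap" data (and there are such among any three points) are non-equivalent — more simply, the interval $(h_0,h_1)$ and the interval $(h_0,h_2)$ have different order types as soon as $h_1\neq h_2$, so they are distinguished by $L_{\omega_1\omega}$. If instead $h_0=h_1=h_2=h$, then all of $a_0,a_1,a_2\in E_h$; but now consider a fourth parameter — no, better: indiscernibility of the triple over the empty set already forces, for the pair types, that swapping cannot be detected, which is fine inside one class, so I must push to the skeleton after all: the subtlety is that within one class the structure is fully symmetric, so the obstruction genuinely comes from the base. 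The resolution: in the Fra\"{i}ss\'{e}-limit version the equivalence classes are not literally all the same infinite size "for free" — we build it so that the configuration of class-sizes (or the labelling data) around any three points differs, mirroring the rigidity of $H$; equivalently, by effective bi-interpretability with the Harrison ordering (citing \cite{HMMM}), an indiscernible triple in our structure would yield one in the Harrison ordering, which has none. I expect the main obstacle to be precisely this last reduction: making the bi-interpretation with the Harrison order genuinely effective and two-way, so that "no indiscernible triple" transfers downward while "Scott rank $\omega_1^{CK}+1$" transfers upward, without the symmetric-group fibers secretly creating an indiscernible triple of their own.
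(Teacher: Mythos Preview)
Your proposal has a genuine gap: the structure you build \emph{does} have indiscernible ordered triples, and your argument that it does not rests on a false claim about the Harrison ordering.

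The Harrison ordering $\omega_1^{CK}(1+\eta)$ has infinite indiscernible sequences (the paper's introduction states this explicitly). Take any three points $h_0<h_1<h_2$ from such a sequence in the non-well-founded part; the intervals $(h_0,h_1)$ and $(h_0,h_2)$ are isomorphic, so your sentence ``the interval $(h_0,h_1)$ and the interval $(h_0,h_2)$ have different order types as soon as $h_1\neq h_2$'' is simply false. Now lift $h_0,h_1,h_2$ to elements $a_0\in E_{h_0}$, $a_1\in E_{h_1}$, $a_2\in E_{h_2}$ in your structure. Since the fibres carry only the symmetric-group action and no further structure, and since your only relations are the equivalence relation and the pulled-back order, $(a_0,a_1,a_2)$ is an indiscernible triple. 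Your final appeal to effective bi-interpretability pushes in exactly the wrong direction: bi-interpretability with the Harrison ordering explains why your structure \emph{inherits} indiscernibles from $H$, not why it lacks them.

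What you are missing is the one ingredient that does the real work in the paper's proof: an infinite family of edge-colours $(C_i)_{i\in\omega}$ on unordered pairs, with \emph{no monochromatic triangles}, built into the Fra\"{i}ss\'{e} class. This kills indiscernible triples at the atomic level: for any $x,y,z$, if $xC_iy$ then not both $yC_iz$ and $xC_iz$, so $(x,y)$ and $(y,z)$ (say) disagree on some $C_i$. The equivalence relation and the Harrison order on the classes are there to push the Scott rank up to $\omega_1^{CK}+1$ (your Scott-rank sketch is on the right track for that part), but the ``no indiscernible triple'' conclusion comes entirely from the colouring, which your construction omits.
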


Our structure will be a \emph{Fra\"{i}ss\'{e}} limit, obtained from a class $K$ of finite structures satisfying the hereditary, amalgamation, and joint embedding properties, abbreviated $HP$, $AP$, and $JEP$.  For a discussion of Fra\"{i}ss\'{e} limits from the point of view of computability, see \cite{CHMM}.  We note that Henson \cite{H} gave an example of a homogeneous triangle-free graph.    

\begin{proof}

We define a class $K$ of finite structures with signature consisting of binary relations $E$ and $(C_i)_{i\in\omega}$.  We view the relations $C_i$ as ``colors'' (in the sense of Ramsey theory) with which we color (unordered) pairs of vertices. A finite structure $\mathcal{A}$ will be in $K$ if 
$E$ is an equivalence relation, the $C_i$ color the unordered pairs of vertices (i.e., $xC_iy$ if and only if $yC_ix$) with exactly one color per edge, and there are no \emph{monochromatic triangles}; i.e., there is no $i \in\omega$ and $x, y, z \in\mathcal{A}$ with $xC_iyC_izC_ix$.

\bigskip
\noindent
\textbf{Claim 1}. $K$ satisfies the hereditary property ($HP$), amalgamation property ($AP$), and the joint embedding property ($JEP$).

\begin{proof} [Proof of Claim 1]

The $HP$ is clear. To see that $K$ has the $AP$, suppose that $\mathcal{A} \subseteq \mathcal{B}, \mathcal{C}$ are structures in $K$. We define a structure $\mathcal{D}\in K$ extending $\mathcal{B}$ and $\mathcal{C}$. We can extend the equivalence relation to $\mathcal{D}$. Since there are only finitely many elements of $\mathcal{A}$ and $\mathcal{B}$, only finitely many colors have been used so far. To color edges $(x, y)$, where $x \in \mathcal{B} - \mathcal{A}$ and $y \in\mathcal{C}-\mathcal{A}$, simply choose $i$ that has not colored any edge yet, and color $(x,y)$ with $i$.  This cannot introduce any monochromatic triangles.  A similar argument, omitting $\mathcal{A}$, shows that $K$ has the $JEP$. 
\end{proof}

Note that we can effectively list the structures of $K$. Thus, $K$ has a computable Fra\"{i}ss\'{e} limit $\mathcal{M}$, and $\mathcal{M}$ has, as its Scott sentence, a computable infinitary sentence $\varphi$. Models of $\varphi$ have infinitely many equivalence classes, all of which have infinitely many elements. Now, let $\mathcal{N}$ be an expansion of $\mathcal{M}$ with a linear order $\leq$ of order type $\omega_1^{CK}(1+\eta)$ on the equivalence classes. The structure $\mathcal{N}$ has a computable copy, since we can find an effective labeling of the equivalence classes by elements of $\omega$, and use the Harrison ordering. Let $R:\mathcal{A}\rightarrow \omega_1^{CK}(1+\eta)$ be the resulting effective order-preserving map, which respects equivalence classes, and which induces a bijection between the equivalence classes and elements of $\omega_1^{CK}(1+\eta)$. 

\bigskip
\noindent
\textbf{Claim 2}. For $\alpha\geq 1$ and tuples $\bar{x}, \bar{y}$ in $\mathcal{N}$, $\bar{x}\sim^\alpha\bar{y}$ in 
$\mathcal{N}$ if and only if $R(\bar{x}) \sim^\alpha R(\bar{y})$ in $\omega_1^{CK}(1+\eta)$ and $\bar{x}\equiv_{at}\bar{y}$, where $\bar{x}\equiv_{at}\bar{y}$ if $\bar{x}$ and $\bar{y}$ satisfy the same atomic formulas in $\mathcal{N}$.

\begin{proof} [Proof of Claim 2]

Suppose that $R(\bar{x})\sim^\alpha R(\bar{y})$ and $\bar{x}\equiv_{at} \bar{y}$. Then we will show that $\bar{x}\sim^\alpha\bar{y}$. Take $\beta <\alpha$ and $\bar{x}'$ a new tuple of elements. Then there is $\bar{a}$ in $\omega_1^{CK}(1 + \eta)$ such that $R(\bar{x})R(\bar{x}') \sim^\beta R(\bar{y})\bar{a}$.  Choose $\bar{z}$ such that $R(\bar{z}) = \bar{a}$. Let $\bar{y}'$ be a tuple of new symbols of the same length as $\bar{z}$. Consider the finite structure, in the signature of $K$, defined on the elements $\bar{y},\bar{y}',\bar{z}$ as follows. The relations $E$ and $C_i$ are defined on $\bar{y}$ and $\bar{z}$ as in $\mathcal{N}$. We set $y_i' E z_i$, and the equivalence classes are completely determined by this. Define $y_i C_k y'_j$ if and only if $x_i C_k x'_j$ (and $y'_i C_k y'_j$ if and only if $x'_i C_k x'_j$). There are no monochromatic triangles among 
$\bar{y},\bar{y}'$, or among $\bar{y},\bar{z}$. Since we have only used finitely many colors so far, we can color the remaining pairs so that there are no monochromatic triangles. 

The finite structure we have defined is in the class $K$, so we can find a realization of $\bar{y}'$ in $\mathcal{N}$. Then $R(\bar{y}') = \bar{a}$, so that $R(\bar{x}),R(\bar{x}') \sim^\beta R(\bar{y}),R(\bar{y}')$. Also, $\bar{x},\bar{x}' \equiv_{at} \bar{y},\bar{y}'$. Thus, 
$\bar{x},\bar{x}'\sim^\beta\bar{y},\bar{y}'$ by the inductive hypothesis (or, for $\beta = 0$, because $\bar{x},\bar{x}'\sim^\beta\bar{y},\bar{y}'$.  So, we have shown that $\bar{x}\sim^\alpha\bar{y}$.  On the other hand, if $\bar{x}\not\equiv_{at}\bar{y}$, then it is immediate that 
$\bar{x}\nsim^\alpha \bar{y}$. If $R(\bar{x})\nsim^\alpha R(\bar{y})$, then it is not hard to see that $\bar{x}\nsim^\alpha\bar{y}$.  
\end{proof}

\noindent
\textbf{Claim 3}. $SR(\mathcal{N})= \omega_1^{CK} +1$. 

\begin{proof} [Proof of Claim 3]

Let $x \in\mathcal{N}$ be such that $R(x)$ is in the ill-founded part of $\omega_1^{CK}(1+\eta)$. We claim that $SR(x) = \omega_1^{CK}$. Fix $\alpha < \omega_1^{CK}$. Let $y$ be such that $pred(R(y))$ is well-founded and $R(x)\sim^\alpha R(y)$. Now, there is no automorphism of the Harrison ordering taking $R(y)$ to $R(x)$, so there is no automorphism of $\mathcal{N}$ taking $y$ to $x$. Thus, $x$ and $y$ are in different automorphism orbits. Since $x$ and $y$ are singletons, $x \equiv_{at} y$.  Thus, by the previous claim, $x \sim^\alpha y$. Since $\alpha$ was arbitrary, $SR(x) = \omega_1^{CK}$, completing the proof of the claim.
\end{proof}

\noindent
\textbf{Claim 4}. $\mathcal{N}$ has no indiscernible ordered triple.

\begin{proof} [Proof of Claim 4]

It suffices to show that no three singleton elements are order indiscernible. Given $x$, $y$, and $z$, let $i$ be such that $x C_i y$. Since 
$\mathcal{N}$ has no monochromatic triangles, it cannot be the case that $y C_i z$ and $x C_i z$. Thus $x$, $y$, and $z$ are not indiscernible. 
\end{proof}

This completes the proof of Theorem \ref{thm:indis-triple}.
\end{proof}

Note that this construction is, in some sense, cheating. The structure $\mathcal{N}$ is effectively bi-interpretable (see \cite{HMMM}) with the Harrison ordering: the Harrison ordering lives inside $\mathcal{N}$ as the definable quotient modulo the definable equivalence relation $E$. The indiscernible sequence of the Harrison ordering becomes an indiscernible sequence of imaginaries in $\mathcal{N}$.

\begin{defn}

Fix a structure $\mathcal{A}$. An \emph{indiscernible sequence of imaginaries of $\mathcal{A}$} is a sequence $(E_i)_{i \in \omega}$ of equivalence classes of $\mathcal{A}$, modulo some $L_{\omega_1 \omega}$-definable equivalence relation, such that for any two finite subsequences $E_{i_1},\ldots,E_{i_n}$ and $E_{j_1},\ldots,E_{j_n}$ (with $i_1 < i_2 < \dots < i_n$ and $j_1 < j_2 < \dots < j_n$) there is an automorphism of $\mathcal{A}$ mapping $E_{i_k}$ to $E_{j_k}$. 
\end{defn}

\begin{prop}

Let $\mathcal{N}$ be the structure from Theorem \ref{thm:indis-triple}. Then $\mathcal{N}$ has an indiscernible sequence of imaginaries.

\end{prop}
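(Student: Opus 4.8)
The plan is to exhibit the obvious candidate sequence of imaginaries coming from the $E$-classes of $\mathcal{N}$ and to reduce the required homogeneity to the known indiscernibility of the Harrison ordering. Recall that $R$ induces a bijection between the $E$-classes of $\mathcal{N}$ and the elements of $\omega_1^{CK}(1+\eta)$, and that the Harrison ordering has an infinite indiscernible sequence; fix such a sequence $r_0 < r_1 < \cdots$ in the ill-founded part of $\omega_1^{CK}(1+\eta)$ (for instance, one may take the standard copy of $\omega$ sitting at the start of the ill-founded part, or any increasing $\omega$-sequence along which the order type ``looks the same'' — indiscernibility for $L_{\omega_1\omega}$ on the Harrison ordering is exactly the statement that all such tuples are automorphic). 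Let $E_i$ be the $E$-class with $R(E_i) = r_i$, i.e.\ $E_i = R^{-1}(r_i)$.

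Next I would verify that $(E_i)_{i\in\omega}$ is an indiscernible sequence of imaginaries in the sense of the definition just given. Fix two increasing index tuples $i_1 < \cdots < i_n$ and $j_1 < \cdots < j_n$. Since $(r_i)$ is indiscernible in the Harrison ordering, there is an automorphism $g$ of $\omega_1^{CK}(1+\eta)$ with $g(r_{i_k}) = r_{j_k}$ for all $k$. I would then promote $g$ to an automorphism $\hat g$ of $\mathcal{N}$: because $E$ is a definable equivalence relation whose quotient is (effectively) the Harrison ordering, and because within each $E$-class and between any two $E$-classes the coloring pattern is arranged freely (the Fra\"{i}ss\'{e} construction makes $\mathcal{N}$ homogeneous over the quotient in the relevant sense), one can build $\hat g$ as a bijection of $\mathcal{N}$ that sends the $E$-class $R^{-1}(r)$ to $R^{-1}(g(r))$ for every $r$, choosing the restriction to each class to be any bijection and then observing that the $C_k$-relations can be matched up by a back-and-forth argument inside the class $K$ — exactly the style of argument used in the proof of Claim 2. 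In particular $\hat g$ maps $E_{i_k}$ to $E_{j_k}$, which is what the definition requires.

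The cleanest way to package the last step is probably to invoke effective bi-interpretability with the Harrison ordering, as flagged in the remark preceding the proposition: an automorphism of the Harrison ordering lifts, via the interpretation, to an automorphism of $\mathcal{N}$, and under this lift the $E$-class coded by $r$ is sent to the $E$-class coded by $g(r)$. Given that lift, indiscernibility of $(E_i)$ is immediate from indiscernibility of $(r_i)$. The main obstacle, and the only point that needs genuine care, is the lifting of automorphisms: one must check that the homogeneity of $\mathcal{N}$ over the quotient is strong enough that the partial map prescribed on $E$-classes really does extend to an automorphism — i.e.\ that an isomorphism of the ``quotient data plus finite coloring data'' always extends, which is precisely the back-and-forth fact underlying Claim 2 applied class-by-class and pair-by-pair. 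Everything else is bookkeeping.

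\begin{proof}
Since $R$ induces a bijection between the $E$-classes of $\mathcal{N}$ and the points of $\omega_1^{CK}(1+\eta)$, and since the Harrison ordering has an infinite indiscernible sequence $r_0 < r_1 < \cdots$, let $E_i = R^{-1}(r_i)$. Given increasing tuples $i_1 < \cdots < i_n$ and $j_1 < \cdots < j_n$, indiscernibility of $(r_i)$ gives an automorphism $g$ of $\omega_1^{CK}(1+\eta)$ with $g(r_{i_k}) = r_{j_k}$. As $\mathcal{N}$ is effectively bi-interpretable with the Harrison ordering (the Harrison ordering being the definable quotient $\mathcal{N}/E$, via $R$), the automorphism $g$ lifts to an automorphism $\hat g$ of $\mathcal{N}$; concretely, using that within and between $E$-classes the colors $C_k$ were assigned freely in the Fra\"{i}ss\'{e} construction, one extends the prescribed map $R^{-1}(r) \mapsto R^{-1}(g(r))$ on classes to all of $\mathcal{N}$ by the same back-and-forth over $K$ used in the proof of Claim 2. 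Then $\hat g(E_{i_k}) = R^{-1}(g(r_{i_k})) = R^{-1}(r_{j_k}) = E_{j_k}$ for all $k$, so $(E_i)_{i\in\omega}$ is an indiscernible sequence of imaginaries in $\mathcal{N}$.
\end{proof}
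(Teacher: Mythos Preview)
Your proposal is correct and follows essentially the same approach as the paper: both arguments take the $E$-classes corresponding to an indiscernible sequence in the Harrison ordering and lift an automorphism of $\omega_1^{CK}(1+\eta)$ to an automorphism of $\mathcal{N}$ via a back-and-forth using the Fra\"{i}ss\'{e} homogeneity of $\mathcal{M}$. The only cosmetic difference is that you package the lift through the bi-interpretability remark (which the paper asserts but does not prove) before falling back on the concrete back-and-forth, whereas the paper carries out the back-and-forth directly, in fact showing the slightly stronger fact that \emph{any} permutation of $E$-classes in $\mathcal{M}$ extends to an automorphism.
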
 

\begin{proof}

The map $R$ from above induced a bijection between the $E$-equivalence classes and the elements of $\omega_1^{CK}(1 + \eta)$. We claim that each automorphism of $\omega_1^{CK}(1 + \eta)$ induces an automorphism of $\mathcal{N}$. Then, since $\omega_1^{CK}(1 + \eta)$ has an indiscernible sequence, $\mathcal{N}$ will have an indiscernible sequence of imaginaries, namely the $E$-equivalence classes in bijection to the indiscernible sequence of $\omega_1^{CK}(1 + \eta)$.

It suffices to see that in the Fra\"{i}ss\'{e} limit $\mathcal{M}$, any permutation $\pi$ of the $E$-equivalence classes extends to an automorphism of $\mathcal{M}$. Let 
$\bar{a}$ and $\bar{b}$ be tuples of elements of $\mathcal{M}$ of the same length, satisfying the same atomic formulas, and such that if $a_i$ is in the $j$th equivalence class then $b_i$ is in the $\pi(j)$th equivalence class. Let $c$ be an additional element of $\mathcal{M}$. We can find an element $d$ of $\mathcal{M}$ such that $d$ is in the $\pi(j)$th equivalence class (if $c$ was in the $j$th equivalence class) and such that $\bar{b},d$ is colored in the same way as $\bar{a},c$. We can do this since $\bar{a},c$ has no monochromatic triangles. This lets us construct the desired automorphism using a back-and-forth construction.
\end{proof}

A construction similar to that of Theorem \ref{thm:indis-triple} allows us to turn any structure $\mathcal{M}$ into a structure $\mathcal{M}^*$ that is effectively bi-interpretable with $\mathcal{M}$, but has no indiscernible triples. Two structures which are effectively bi-interpretable have many of the same computability-theoretic properties; for example, they have the same computable dimension (see \cite{HKSS}). In light of this, we want not just a structure $\mathcal{M}$ with Scott rank $\omega_1^{CK} + 1$ and no indiscernible sequence, but a structure $\mathcal{M}$ with Scott rank $\omega_1^{CK} + 1$ and no indiscernible sequence of imaginaries. To produce such a structure, we use a construction originally due to Makkai \cite{M}, and refined by Knight and Millar \cite{KM} (see also \cite{K}).  Makkai used this construction to produce an arithmetical structure of Scott rank 
$\omega_1^{CK}$, and Knight and Millar used it to give the first example of a computable structure of Scott rank $\omega_1^{CK}$.  

Let $T \subseteq \omega^{< \omega}$ be a tree. We will define a new structure $\mathcal{A}(T)$. Let $T_n$ be the set of nodes at the $n$th level of $T$. For each $n$, we define $G_n = \mathcal{P}_{< \omega}(T_n)$ to be the collection of finite subsets of $T_n$.  Now, $G_n$ forms an abelian group under symmetric difference $\Delta$. The identity element of $G_n$ is the empty set, which we denote by $\id_n$. Let $G = \bigcup_n G_n$. The tree structure on $T$ induces a tree structure on $G$, which we will define using a predecessor relation $p$. Given $a \in G_{n+1}$, write $a = \{t_1,\ldots,t_n\}$. Then set $p(a)$ to be the sum of the predecessors of $t_1,\ldots,t_n$.  An element $t^*$ is in $p(a)$ if and only if the number of successors of $t^*$ in $a$ is odd. We have $p(\id_{n+1}) = \id_n$.  Note that $p$ is a homomorphism from $G_{n+1}$ to $G_n$. For each $a \in G$, we will define a unary function $f_a$. If $a \in G_m$, and $b \in G_n$, let $k = \min(m,n)$. Let $a^*$ and $b^*$ be the $p$-predecessors of $a$ and $b$ that are in $G_k$: $a^* = p^{m-k}(a)$ and $b^* = p^{n-k}(b)$. Then set $f_a(b) = a^* \Delta b^*$, noting that $f_a(b) = f_b(a)$. Let $\mathcal{A}(T)$ be the structure $(T,(f_a)_{a \in G})$.

We note the following facts from \cite{KM}. Since $a \in G_n$ if and only if $f_{\id_n}(a) = a$ and for all $m < n$, $f_{\id_m}(a) \neq a$, $G_n$ is preserved under automorphisms of $\mathcal{A}(T)$. Also, $p$ is preserved under automorphisms, as for $a \in G_{n+1}$ and $b \in G_n$, $p(a) = b$ if and only if $f_{\id_n}(a) = b$. Finally, for any $a \in G_n$, $a = f_a(\id_n)$ and so any automorphism of $\mathcal{A}(T)$ is determined by the images of the elements $\id_n$.

\begin{lem}[Lemma 3.3 of \cite{KM}]\label{lem-3-3}

Let $a \in G_n$, with $a \neq \id_n$. Then the tree rank of $a$ is the minimum of the tree ranks of $t$ for $t \in a$.

\end{lem}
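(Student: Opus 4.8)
The plan is to prove both inequalities separately, using the definition of tree rank on $T$ and the structure of $G_n$ as $\mathcal{P}_{<\omega}(T_n)$ under symmetric difference. Recall that the tree rank of a node $x$ in a tree is defined by: $x$ has tree rank $0$ if $x$ is a leaf; $x$ has tree rank $\geq \alpha+1$ if $x$ has a successor of tree rank $\geq \alpha$; and $x$ has tree rank $\geq \lambda$ for limit $\lambda$ if it has tree rank $\geq \beta$ for all $\beta < \lambda$ (and the tree rank is $\infty$ if the node lies on an infinite path). The point is that an element $a = \{t_1,\ldots,t_k\} \in G_n$ with $a \neq \id_n$ should be thought of as a node in the tree $\mathcal{A}(T)$ whose successors are the elements $b \in G_{n+1}$ with $p(b) = a$, and we must relate the tree rank of such an $a$ to the tree ranks of the $t_i$ in $T$.

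First I would establish the inequality $\mathrm{tr}(a) \geq \min_{t \in a}\mathrm{tr}(t)$, arguing by induction on $\gamma := \min_{t \in a}\mathrm{tr}(t)$. The key observation is that the map $p : G_{n+1} \to G_n$ is a surjective homomorphism, and more usefully: if $t \in a$ has a successor $t'$ in $T$, we can form $b \in G_{n+1}$ by replacing $t$ with $t'$ in the defining set of $a$ (and leaving the other elements of $a$ alone, each still sitting at level $n$, which is a contradiction — so actually one must push \emph{every} element of $a$ down to level $n+1$ simultaneously, choosing a successor $t_i'$ of each $t_i$, and take $b = \{t_1',\ldots,t_k'\}$, which has $p(b) = a$ provided the $t_i'$ are chosen distinct, which is possible since the tree is rank-homogeneous — actually, for this lemma we need only that each $t_i$ with positive tree rank has a successor of the appropriate rank). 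Given $\gamma > 0$, pick for each $t_i$ a successor $t_i'$ with $\mathrm{tr}(t_i') \geq \gamma - 1$ when $\gamma$ is a successor, or $\mathrm{tr}(t_i')$ arbitrarily large below $\gamma$ when $\gamma$ is a limit; then $b = \{t_1',\ldots,t_k'\}$ is a successor of $a$ with $\min_{t' \in b}\mathrm{tr}(t') \geq \gamma - 1$ (resp. arbitrarily large below $\gamma$), so by induction $\mathrm{tr}(b) \geq \gamma - 1$, hence $\mathrm{tr}(a) \geq \gamma$. The limit and infinite-rank cases are handled the same way.

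For the reverse inequality $\mathrm{tr}(a) \leq \min_{t \in a}\mathrm{tr}(t)$, I would again induct, this time on $\mathrm{tr}(a)$, by analyzing an arbitrary successor $b$ of $a$ in $\mathcal{A}(T)$. Such a $b \in G_{n+1}$ satisfies $p(b) = a$, which by the definition of $p$ means: for each $t^* \in a$, the number of successors of $t^*$ appearing in $b$ is odd (in particular, nonzero), and for each $t^* \notin a$, that number is even. Since $a \neq \id_n$, there is some $t^* \in a$, and it has at least one successor $t'$ in $b$; moreover $t' \ne t^*$ forces $\mathrm{tr}(t') < \mathrm{tr}(t^*)$, so $\min_{s \in b} \mathrm{tr}(s) \leq \mathrm{tr}(t') < \mathrm{tr}(t^*)$. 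But we also need to know $b \ne \id_{n+1}$, which holds since $p(\id_{n+1}) = \id_{n+1}$ has $p$-image $\id_n \ne a$; so the inductive hypothesis gives $\mathrm{tr}(b) = \min_{s \in b}\mathrm{tr}(s)$, and combining, $\mathrm{tr}(b) < \mathrm{tr}(t^*)$ for every $t^* \in a$. Taking the supremum over successors $b$ (plus $1$, in the successor case) yields $\mathrm{tr}(a) \leq \min_{t \in a}\mathrm{tr}(t)$, with the limit and infinite cases following the same pattern.

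The main obstacle I anticipate is bookkeeping in the $\geq$ direction: one must verify that a family of successors $t_i'$, one for each $t_i \in a$, can be chosen so that $b = \{t_1', \ldots, t_k'\}$ genuinely has $k$ elements (distinctness) and $p(b) = a$ — that is, each $t_i$ acquires an odd number (exactly one) of successors in $b$ and no other node of $T_n$ acquires an odd number. Distinct $t_i$ have disjoint sets of successors, so distinctness of the $t_i'$ is automatic and $p(b) = a$ follows immediately; the only subtlety is that a node $t_i \in a$ of tree rank $0$ has no successors at all, but such a node forces $\min_{t \in a}\mathrm{tr}(t) = 0$, in which case the inequality $\mathrm{tr}(a) \geq 0$ is trivial and there is nothing to push down. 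So in fact the argument goes through cleanly once this degenerate case is separated out.
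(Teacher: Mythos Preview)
The paper does not give its own proof of this lemma; it is imported verbatim from Knight--Millar \cite{KM} and used as a black box, so there is no in-paper argument to compare against. Your approach is the natural one and is essentially correct, but two points need cleaning up.

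First, in the $\geq$ direction your parenthetical about rank-homogeneity is both irrelevant and inapplicable (the lemma is for arbitrary $T$); you yourself observe later that distinct $t_i \in T_n$ have disjoint successor sets, so the chosen $t_i'$ are automatically distinct and $p(\{t_1',\ldots,t_k'\}) = a$ with no extra hypothesis. Strip out the digression.

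Second, in the $\leq$ direction, inducting on $\mathrm{tr}(a)$ is the wrong parameter: if $\mathrm{tr}(a) = \infty$ while $\mu(a) := \min_{t\in a}\mathrm{tr}(t)$ is an ordinal, transfinite induction on $\mathrm{tr}(a)$ does not get off the ground. Induct on $\mu(a)$ instead. Your own computation already shows $\mu(b) < \mu(a)$ for every $G$-successor $b$ of $a$ (since $b$ must contain a $T$-successor of each $t^*\in a$), so the inductive hypothesis on $\mu(b)$ gives $\mathrm{tr}(b)\leq\mu(b)<\mu(a)$ and hence $\mathrm{tr}(a)=\sup_b(\mathrm{tr}(b)+1)\leq\mu(a)$; the base case $\mu(a)=0$ you handled correctly in your final paragraph. (Alternatively, treat $\mathrm{tr}(a)=\infty$ separately: an infinite $G$-path below $a$ projects, via the odd-count condition, to an infinite $T$-path below every $t\in a$.) Also, the line ``$t'\neq t^*$ forces $\mathrm{tr}(t')<\mathrm{tr}(t^*)$'' should simply read that a child has strictly smaller ordinal tree rank than its parent.
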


\begin{lem}[Lemma 3.6 of \cite{KM}]\label{lem-3-6}

For $a \in G_n$, $a \equiv^\beta \id_n$ if and only if the tree rank of $a$ is at least $\omega \cdot \beta$.

\end{lem}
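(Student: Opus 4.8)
The plan is to prove Lemma \ref{lem-3-6} by induction on $\beta$. To make the induction close I would prove, simultaneously, a tuple-level refinement: for tuples $\bar a = (a_1,\dots,a_k)$ and $\bar b = (b_1,\dots,b_k)$ from $\mathcal A(T)$ with $a_i, b_i \in G_{n_i}$, the relation $(\mathcal A(T),\bar a) \sim^\beta (\mathcal A(T),\bar b)$ is governed by the levels $n_i$ together with the following data: for each symmetric-difference ($\mathbb F_2$-linear) combination $c$ of the entries $a_i$ and their $p$-projections, and the corresponding combination $c'$ on the $\bar b$ side, whether $c = c'$ or $\min(\mathrm{tr}(c),\mathrm{tr}(c')) \ge \omega\cdot\beta$, using the convention $\mathrm{tr}(\id_m)=\infty$. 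Lemma \ref{lem-3-6} is the instance $k=1$, $b_1 = \id_n$. The reason such a refinement is available at all is the affine character of the structure: $p$ is a group homomorphism, and for $x\in G_m$ the value $f_c(x)$ depends on $x$ only through $m$ and the projection of $x$ to level $\min(m,\mathrm{lev}(c))$; hence the atomic type of a tuple is already determined by the levels and the linear combinations of the projected entries, which gives the base case $\beta = 0$ (where $\omega\cdot 0 = 0$ and the rank side of the equivalence is vacuous).

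For the forward direction of the inductive step — if all the rank conditions hold at $\omega\cdot\beta$, then $\bar a \sim^\beta \bar b$ — I would play the back-and-forth game directly. Given $\beta_0 < \beta$ and $c \in G_m$ played against $\bar a$, I must produce $e \in G_m$ realizing the conditions at $\omega\cdot\beta_0$ for the extended tuples. If $m$ is below the relevant levels, $e$ is essentially forced: it is the symmetric difference of $c$ with $p$-projections of the offsets $a_i \Delta b_i$, each of tree rank $\ge \omega\cdot\beta$, and one checks that the new linear combinations are either unchanged or differ by something of rank $\ge \omega\cdot\beta > \omega\cdot\beta_0$. If $m$ is large, I need a lift of an offset through $p^{\,m-(\cdot)}$: such a lift exists because an element of rank $\ge \omega\cdot\beta \ge \omega$ is a sum of nodes having descendants at every finite depth, and — the point that makes the recursion go — the lift can be chosen of rank $\ge \omega\cdot\beta_0$, since a node of rank $\ge \omega\cdot\beta \ge \omega\cdot\beta_0 + \omega$ has, at each finite relative depth, a descendant of rank $\ge \omega\cdot\beta_0$. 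The inductive hypothesis applied to the extended tuples then finishes it.

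For the converse — if some combination $c \ne c'$ has $\mathrm{tr}(c) < \omega\cdot\beta$, then $\bar a \nsim^\beta \bar b$ — I would describe the spoiler's winning strategy, equivalently a distinguishing computable infinitary formula. The key is the ``$\omega$ per round'' phenomenon already visible for $\beta=1$: by Lemma \ref{lem-3-3}, $c$ contains a node $t_0$ with $\mathrm{tr}(t_0) = \mathrm{tr}(c) =: \gamma$, and once we move to a level $m$ with $m - \mathrm{lev}(c) > \gamma$, the node $t_0$ has no descendant at level $m$, so it can never occur in any $p^{\,m-\mathrm{lev}(c)}$-image; hence playing any element of $G_m$ against the side carrying the smaller-rank combination cannot be answered — already at $\beta_0 = 0$. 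When $\gamma \ge \omega$ one first spends finitely many rounds ascending in the $p$-tree of $\mathcal A(T)$, reducing to the finite-rank case over a smaller ordinal, using that each level ascended costs one unit of tree rank while a single move can ascend to any finite relative depth; this is precisely where the factor $\omega$ in $\omega\cdot\beta$ enters.

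I expect the main obstacle to be organizational rather than conceptual: stating the tuple-level refinement so that it is both true and strong enough to be self-propagating — in particular handling the identities $\id_m$, which behave both as group identities and as the canonical elements of infinite rank — and keeping track of the levels $n_i$ throughout the back-and-forth. The definability facts about the relation $p$ and the sets $G_n$ recalled just before Lemma \ref{lem-3-3}, together with Lemma \ref{lem-3-3} itself, are exactly what is needed to push this bookkeeping through.
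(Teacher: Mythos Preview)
The paper does not contain a proof of this lemma; it is quoted as Lemma~3.6 of Knight--Millar \cite{KM}, so there is no in-paper argument to compare against. Your outline is correct and matches the proof in that reference (and in Makkai \cite{M}): the tuple-level invariant built from levels and tree ranks of symmetric-difference combinations, the lifting of high-rank offsets through $p$ for the forward direction, and the ``$\omega$ per round'' spoiler strategy for the converse are precisely the ingredients of the original argument.
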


\begin{thm}[Theorem 3.7 and Lemma 4.3 of \cite{KM}]\label{thm:3-7}
There is a computable tree $T$ such that $SR(\mathcal{A}(T)) = \omega_1^{CK}$.
\end{thm}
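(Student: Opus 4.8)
The plan is to reduce everything to a single combinatorial fact about the tree $T$: we want a computable tree $T\subseteq\omega^{<\omega}$ which is \emph{rank-homogeneous} in the sense defined earlier, has no infinite path, and for which the tree ranks of nodes at each level can be computed uniformly, yet for which there is \emph{no} computable bound on the tree ranks occurring---i.e., for every computable ordinal $\alpha$ there is a node of tree rank exceeding $\alpha$, while the whole tree is still well-founded. Such a tree is exactly the ``thin'' tree produced by the Makkai/Knight--Millar construction, and its existence (as a computable tree) is what the earlier paragraph promising the material of \cite{CKM} supplies: one takes the computable tree $T^*$ of Scott rank $\omega_1^{CK}$ described there, or directly builds a tree whose well-founded part has rank exactly $\omega_1^{CK}$. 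So the first step is: \emph{fix such a computable $T$, and note its tree ranks are uniformly computable in the sense described.}

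Next I would transfer this to $\mathcal{A}(T)$ using Lemmas~\ref{lem-3-3} and~\ref{lem-3-6}. Lemma~\ref{lem-3-6} says that for $a\in G_n$, the relation $a\equiv^\beta \id_n$ holds precisely when the tree rank of $a$ is at least $\omega\cdot\beta$; combined with Lemma~\ref{lem-3-3} this lets me compute, for any tuple of elements of $\mathcal{A}(T)$, exactly which $\sim^\beta$-type it realizes in terms of the tree ranks of the finitely many relevant nodes of $T$. The key structural point (from \cite{KM}) already recalled in the excerpt is that $G_n$, the predecessor map $p$, and hence the whole tree structure on $G=\bigcup_n G_n$ are preserved by automorphisms, and that any automorphism of $\mathcal{A}(T)$ is determined by the images of the identities $\id_n$. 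Using this, the orbit of an element $a\in G_n$ is determined by: (i) which level it lives on, (ii) the isomorphism type, \emph{with tree ranks}, of the finite subtree of $T$ generated by the nodes $t\in a$ together with their predecessors, and (iii) the analogous data for the coordinates that $p^k(a)$ picks out at lower levels. Because $T$ is rank-homogeneous, this finite data is in fact complete: any two elements with the same such data are automorphic, by a back-and-forth argument on $\mathcal{A}(T)$ that at each stage uses rank-homogeneity of $T$ to find a matching successor node of the correct tree rank (this is the analogue, inside the group-tree, of the Lemma relating $\approx^\alpha$ and $\sim^\alpha$ for the trees $T^a$).

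From here the Scott-rank computation is as in \cite{KM}. For the upper bound $SR(\mathcal{A}(T))\le\omega_1^{CK}$: given any tuple, its orbit is pinned down by finitely much tree-rank data, each piece of which is of the form ``the tree rank of this node is exactly $\gamma$'' for a computable ordinal $\gamma$ (these $\gamma$ are the ranks occurring in $T$, all computable since $T$ is well-founded with a computable ranking), and such conditions are expressible by computable infinitary formulas; hence every orbit is computable-infinitarily definable, so $SR(\mathcal{A}(T))\le\omega_1^{CK}$. For the lower bound $SR(\mathcal{A}(T))\ge\omega_1^{CK}$: by Lemma~\ref{lem-3-6}, $\id_n$ and an element $a\in G_n$ of tree rank $\ge\omega\cdot\beta$ are $\equiv^\beta$-equivalent but, choosing $a$ of tree rank strictly between $\omega\cdot\beta$ and $\omega\cdot(\beta+1)$ (possible by the unboundedness of ranks in $T$ and rank-homogeneity), not $\equiv^{\beta+1}$-equivalent; since ranks in $T$ are unbounded below $\omega_1^{CK}$, this produces tuples of Scott rank $\ge\beta$ for every computable $\beta$, so $SR(\mathcal{A}(T))\ge\omega_1^{CK}$. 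Together these give $SR(\mathcal{A}(T))=\omega_1^{CK}$.

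The main obstacle is the back-and-forth argument establishing that the finite tree-rank data really is a \emph{complete} invariant for automorphism orbits in $\mathcal{A}(T)$---one must check that a proposed partial isomorphism between finitely generated substructures of $\mathcal{A}(T)$ (which involves not just nodes of $T$ but the $\Delta$-combinatorics of the groups $G_n$ and the functions $f_a$) can always be extended, and the only resource for extension is rank-homogeneity of $T$ supplying successors of prescribed tree rank. This is precisely Lemma~4.3 of \cite{KM}, and I would cite it rather than re-derive it; the remaining bookkeeping (translating between $\sim^\alpha$ on $\mathcal{A}(T)$ and tree ranks in $T$) is routine given Lemmas~\ref{lem-3-3} and~\ref{lem-3-6}.
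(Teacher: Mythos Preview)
The paper does not give a proof of this theorem; it is quoted as Theorem~3.7 and Lemma~4.3 of \cite{KM} and used as a black box, so there is no argument in the paper to compare yours against.

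Your sketch, however, contains a genuine error at the very first step. You ask for a computable tree $T$ that ``has no infinite path'' and yet has tree ranks unbounded below $\omega_1^{CK}$. No such tree exists: a computable well-founded tree has a computable rank function, so its rank is some computable ordinal, and every node rank is bounded by that ordinal. With $T$ well-founded and computable, the tree ranks occurring in each $G_n$ are then bounded by a fixed computable $\alpha$ (via Lemma~\ref{lem-3-3}), and Lemma~\ref{lem-3-6} gives a uniform computable bound on the Scott ranks of all tuples in $\mathcal{A}(T)$; so $SR(\mathcal{A}(T))$ would be a computable ordinal, not $\omega_1^{CK}$.

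The tree actually used in \cite{KM} \emph{does} have infinite paths. What forces $SR(\mathcal{A}(T))=\omega_1^{CK}$ rather than $\omega_1^{CK}+1$ is a ``thinness'' condition on $T$ guaranteeing that every $a\in G_n$ of tree rank $\infty$ is in fact automorphic to $\id_n$, so that the orbit of every tuple remains definable by a computable infinitary formula even though some nodes sit on paths. Your upper-bound argument (``these $\gamma$ are the ranks occurring in $T$, all computable since $T$ is well-founded with a computable ranking'') collapses once $T$ has paths, and it is precisely the thinness hypothesis, together with the back-and-forth of Lemma~4.3 in \cite{KM}, that rescues it. You propose to cite that lemma, but you have misidentified its role: it is not merely certifying that rank data determine orbits in a well-founded tree, but rather handling the rank-$\infty$ elements in an ill-founded one.
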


\begin{lem}\label{lem:no-ind}
Let $T$ be a tree. Then $\mathcal{A}(T)$ does not have an indiscernible ordered triple of imaginaries.
\end{lem}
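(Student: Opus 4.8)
The plan is to exploit the structural rigidity recorded in the three facts from \cite{KM} just before the statement: in $\mathcal{A}(T)$, each level set $G_n$ is automorphism-invariant, the predecessor map $p$ is automorphism-invariant, and — crucially — every automorphism of $\mathcal{A}(T)$ is determined by the images of the finitely-much-data elements $\id_n$. I would first reduce to the case of a genuine (non-imaginary) indiscernible triple plus a small amount of imaginary bookkeeping, and then show that even the weaker ``indiscernible triple of imaginaries'' cannot exist because any automorphism moving one of three elements to another is forced, by the level and predecessor structure, to act incompatibly.

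First I would set up notation: suppose toward a contradiction that $(E_0, E_1, E_2, \ldots)$ is an indiscernible sequence of imaginaries, where the $E_i$ are classes modulo some $L_{\omega_1\omega}$-definable equivalence relation $\theta$. The key observation is that the only $L_{\omega_1\omega}$-definable equivalence relations on $\mathcal{A}(T)$ (up to the relevant coarseness) are detectable from the level structure: since $G_n$ is invariant and definable, and within a level the group structure via the $f_a$ is recoverable, I expect that any definable equivalence class $E_i$ either is a single element or determines its level $n_i$ and in fact is controlled by where the $\id_{n}$'s go. I would make precise the claim that an automorphism of $\mathcal{A}(T)$ sending $E_{i_1},\ldots,E_{i_k}$ to $E_{j_1},\ldots,E_{j_k}$ must in particular respect levels, so all the $E_i$ lie in a common level $G_n$ (or the sequence is ``spread across levels'' in a fixed pattern, which I would handle separately — but indiscernibility forces a single pattern, and a level-increasing pattern is impossible for an infinite sequence since automorphisms fix each $G_n$ setwise).

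Once all $E_i$ live in a single level $G_n$, I would use that $G_n = \mathcal{P}_{<\omega}(T_n)$ is an $\mathbb{F}_2$-vector space under $\Delta$, and that $f_a(b) = a \Delta b$ for $a,b \in G_n$, so an automorphism restricted to $G_n$ is an affine (indeed $\mathbb{F}_2$-linear, fixing $\id_n$) bijection of this vector space — and moreover it must come from a permutation of $T_n$ induced by an automorphism of the whole tree-with-ranks, hence is tightly constrained. Then the contradiction is combinatorial: three distinct vectors $E_0, E_1, E_2$ in an $\mathbb{F}_2$-space can never be order-indiscernible, because (for instance) the relation ``$E_0 \Delta E_1 = E_2$'' or ``$E_0, E_1, E_2$ are linearly dependent'' is a definable property of ordered triples that is not symmetric under all reorderings once the three are distinct — mirroring exactly the ``no monochromatic triangle'' obstruction in Claim 4 of Theorem \ref{thm:indis-triple}. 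Concretely: among any three distinct elements of $G_n$, the parity of $|\{t \in T_n : t \in E_a \text{ for an odd number of } a\}|$, or the dependency pattern of $E_1\Delta E_0$ versus $E_2 \Delta E_0$, breaks the symmetry.

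The main obstacle I anticipate is the first reduction — pinning down what the $L_{\omega_1\omega}$-definable equivalence relations on $\mathcal{A}(T)$ can be, and in particular ruling out (or handling) an imaginary sequence whose classes straddle infinitely many levels or whose classes are infinite sets not confined to a single $G_n$. For this I would lean on Lemma~\ref{lem-3-6} and the definability of the $\equiv^\beta$-relations together with level-invariance: an equivalence class contained in no single $G_n$ would have to be a union of pieces from various levels, but indiscernibility of the ordered sequence, combined with the fact that automorphisms fix every $G_n$, forces a uniform level assignment, and then an infinite injective such assignment is impossible. Everything after that reduction is the short $\mathbb{F}_2$-linear-algebra argument, which I expect to be routine.
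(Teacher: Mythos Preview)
Your proposal has a genuine gap at each of its two stages, and it misses the one-line structural fact that makes the paper's proof immediate.

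\textbf{The reduction is both unnecessary and unproven.} You spend most of the outline trying to force every $E_i$ to lie inside a single level $G_n$, and you flag this as the main obstacle. It is never established, and in fact there is no reason a definable equivalence class must be contained in one level. The paper avoids this entirely: it simply fixes $a\in E_1$, lets $n$ be the level with $a\in G_n$, and observes that since automorphisms preserve each $G_n$ setwise and there are automorphisms carrying $E_1$ to $E_2$ and to $E_3$, the intersections $E_2\cap G_n$ and $E_3\cap G_n$ are nonempty. No classification of definable equivalence relations is needed; one only works with $E_i\cap G_n$.

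\textbf{The action on $G_n$ is not what you say.} You assert that an automorphism restricted to $G_n$ is ``$\mathbb{F}_2$-linear, fixing $\id_n$.'' This is false. From $b=f_{b\Delta a}(a)$ one gets $g(b)=f_{b\Delta a}(g(a))=b\Delta\bigl(a\Delta g(a)\bigr)$, so every automorphism acts on $G_n$ as a \emph{translation}, not a linear map, and $\id_n$ is generally moved. This is exactly the fact recorded before the lemma (``any automorphism is determined by the images of the $\id_n$''), and it is the whole content of the proof: the induced action of $\mathrm{Aut}(\mathcal{A}(T))$ on $G_n$ factors through the abelian group of translations. An automorphism $g$ with $g(E_1)=E_1$ translates by an element of the stabilizer of $E_1\cap G_n$; since $E_2\cap G_n$ and $E_3\cap G_n$ are translates of $E_1\cap G_n$ (via the automorphisms taking $E_1$ to $E_2$, $E_3$) and the group is abelian, that same translation stabilizes $E_2\cap G_n$ as well. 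Hence no automorphism can fix $E_1$ while sending $E_2$ to $E_3$, contradicting order-indiscernibility of the pair $(E_1,E_2)$ with $(E_1,E_3)$.

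\textbf{Your endgame combinatorics does not work.} The claim that ``three distinct vectors in an $\mathbb{F}_2$-space can never be order-indiscernible because linear dependence is not symmetric under reorderings'' is simply wrong: linear (in)dependence of a set is invariant under permutation, so neither it nor the parity invariant you propose distinguishes ordered triples. The obstruction is not a static invariant of the three classes; it is the \emph{abelianness of the action}, which you do not invoke.

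In short: drop the attempt to classify definable equivalence relations, correct the description of the action on $G_n$ to ``translation by $g(\id_n)$,'' and the contradiction is immediate from the fact that an abelian group of translations cannot fix one coset setwise while properly moving a translate of it.
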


\begin{proof}

Suppose to the contrary that there is a definable equivalence relation with three equivalence classes $E_1$, $E_2$, and $E_3$ that form an indiscernible triple. Fix $a \in E_1$. Let $n$ be such that $a \in G_n$. There are automorphisms of $\mathcal{A}(T)$, one taking $E_1$ to $E_2$, and one taking $E_1$ to $E_3$. So, $E_2$ and $E_3$ both contain elements of $G_n$. Let $g$ be an automorphism of $\mathcal{A}(T)$ fixing $E_1$. Then for each $b \in G_n$, $b = f_{b \Delta a}(a)$, and so $g(b) = f_{b \Delta a}(g(a))$. Hence, the action of $g$ on $G_n$ is entirely determined by where $g$ sends $a$. Thus, there cannot be two automorphisms of $\mathcal{A}(T)$, one fixing $E_1$, $E_2$, and $E_3$ and the other fixing $E_1$ and mapping $E_2$ to $E_3$.  This contradicts the order-indiscerniblity of $E_1$, $E_2$, and $E_3$.
\end{proof}

\begin{thm}\label{thm:ind-plus}
There is a computable tree $T$ such that $\mathcal{A}(T)$ has Scott rank $\omega_1^{CK} + 1$.
\end{thm}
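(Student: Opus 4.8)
The plan is to take $T$ to be a tree built from the Harrison ordering — specifically, let $T$ be the tree of finite descending sequences in a fixed computable copy of the Harrison linear order $H$ of type $\omega_1^{CK}(1+\eta)$. This is a computable tree. Because $H$ has a well-founded part of order type $\omega_1^{CK}$ but also an ill-founded part, $T$ will be ill-founded, but every hyperarithmetical path will be ``captured'' — more precisely, $T$ has nodes of every computable tree rank along its well-founded part, yet $T$ itself is not well-founded. The first step is to compute the tree ranks: a node $\sigma \in T$ corresponding to a descending sequence $x_0 > x_1 > \dots > x_k$ in $H$ should have tree rank controlled by the order type of $\{y \in H : y < x_k\}$, so that along the well-founded part of $H$ we realize all computable ordinals as tree ranks, while the ill-founded part produces a node from which one can descend forever. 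I would verify that $T$ is (or can be arranged to be) rank-homogeneous, or at least that the analysis of $\mathcal{A}(T)$ in \cite{KM} applies.

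Next I would use the structural facts recalled before Lemma~\ref{lem-3-3}, together with Lemmas~\ref{lem-3-3} and~\ref{lem-3-6}, to transfer the Scott rank computation from $T$ to $\mathcal{A}(T)$. The key point from \cite{KM} is that the $\sim^\beta$-type of $\mathcal{A}(T)$ is governed by the tree ranks of the generators $\id_n$ and of elements reachable from a tuple, via Lemma~\ref{lem-3-6}: $a \equiv^\beta \id_n$ iff the tree rank of $a$ is at least $\omega\cdot\beta$. Since any automorphism of $\mathcal{A}(T)$ is determined by the images of the $\id_n$, and $\id_n$ has tree rank equal to the rank of $T_n$, the orbit of a tuple in $\mathcal{A}(T)$ is controlled by the tree ranks of finitely many nodes of $T$. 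So the Scott rank of $\mathcal{A}(T)$ will be $\omega_1^{CK}+1$ precisely when $T$ has a node — equivalently $\mathcal{A}(T)$ has an element — whose tree rank is $\geq \omega_1^{CK}$, i.e. whose ``rank'' is not a computable ordinal, so that its orbit cannot be pinned down by a computable infinitary formula. The ill-founded part of $H$ provides exactly such a node: a node lying on an infinite (non-hyperarithmetical) descending path has no ordinal tree rank. I would show that the corresponding element of $\mathcal{A}(T)$ has orbit not definable by any computable infinitary formula — hence $SR(\mathcal{A}(T)) \geq \omega_1^{CK}+1$ — while Nadel's bound gives $\leq \omega_1^{CK}+1$, so equality holds. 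One also checks $SR(\mathcal{A}(T))$ is genuinely $\omega_1^{CK}+1$ and not smaller by noting, as in Theorem~\ref{thm:3-7} and the well-founded approximations, that cofinally many computable ordinals appear as tree ranks in $T$, so no single computable $\Sigma_\alpha$ formula suffices even for the well-founded part.

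The main obstacle I anticipate is the careful verification that the ill-founded path in the Harrison-derived tree $T$ actually yields an element of $\mathcal{A}(T)$ with a non-computable-infinitary orbit, rather than merely an element whose tree rank is large-but-computable. One must rule out that the indistinguishability arguments that usually collapse ranks in rank-homogeneous trees also collapse the ill-founded node into the well-founded part at some computable level; this requires showing that for every computable ordinal $\alpha$ there is an element (a node on the ill-founded path, or $\id_n$ for the appropriate $n$) that is $\sim^\alpha$-equivalent to a node of computable rank but not automorphic to it. Concretely, this amounts to the by-now-standard Harrison-ordering phenomenon — the ill-founded part is $\sim^\alpha$-equivalent to arbitrarily large well-founded initial segments for every computable $\alpha$, yet not automorphic — pushed through the operator $\mathcal{A}(-)$ using Lemma~\ref{lem-3-6}. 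I would isolate this as the crux and handle it by combining the approximating trees $T^a$ (here, truncations of $H$ at computable heights) with the transfer lemmas, much as in Section~2. Finally, Lemma~\ref{lem:no-ind} applies to this $T$ with no further work, giving the promised structure of Scott rank $\omega_1^{CK}+1$ with no indiscernible ordered triple of imaginaries.
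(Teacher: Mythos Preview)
Your choice of $T$ (finite descending sequences in the Harrison ordering) and your plan to invoke Lemmas~\ref{lem-3-3} and~\ref{lem-3-6} match the paper exactly, so the overall strategy is right. However, you are making the argument harder than it needs to be, and the ``main obstacle'' you anticipate is a red herring.

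The paper does not look at an element coming from a node on an ill-founded path of $T$; it simply takes $\id_n$ as the witness of Scott rank $\omega_1^{CK}$. The point is that at level $n$ of $T$ there are nodes of every computable tree rank (coming from the well-founded part of $H$), so by Lemma~\ref{lem-3-3} the same is true in $G_n$. Thus for every computable $\beta$ one can choose $a\in G_n$ whose tree rank is a computable ordinal $\geq \omega\cdot\beta$; Lemma~\ref{lem-3-6} then gives $a\equiv^\beta \id_n$, while the fact that the tree rank of $a$ is an ordinal (hence $<\omega\cdot\gamma$ for some computable $\gamma$) gives $a\not\equiv^\gamma \id_n$. That is the entire argument: two lines, no analysis of ill-founded paths, no rank-homogeneity check, no approximating trees $T^a$, and no need to verify separately that an orbit is undefinable by a computable infinitary formula.

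Two specific slips to correct. First, your claim that ``$\id_n$ has tree rank equal to the rank of $T_n$'' is off: $\id_n$ is the empty set and always has $\id_{n+1}$ as a successor, so its tree rank in $G$ is $\infty$ regardless of $T$; Lemma~\ref{lem-3-3} explicitly excludes $\id_n$. Second, the difficulty you isolate---showing that the ill-founded node is $\sim^\alpha$-equivalent to well-founded nodes for every computable $\alpha$ yet not automorphic---is already packaged inside Lemma~\ref{lem-3-6} once you use $\id_n$ as the anchor, so there is nothing further to prove.
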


\begin{proof}

Let $T$ be the tree of finite decreasing sequences in the Harrison ordering. We claim that $\mathcal{A}(T)$ has Scott rank $\omega_1^{CK} + 1$. Note that at each level of $T$, there are elements of every computable tree rank, and there are elements with infinite tree rank. Let $G$ be the tree defined from $T$ as above. Then by Lemma \ref{lem-3-3}, at each level of $G$, there are elements of every computable tree rank, and there are elements with infinite tree rank. Fix $n$. Given $\beta < \omega_1^{CK}$, there is $a \in G_n$ with computable tree rank at least $\omega \cdot \beta$. By Lemma \ref{lem-3-6}, $a \equiv^\beta \id_n$, but $a \nequiv^\gamma \id_n$ for some $\gamma > \beta$. Hence $SR(\id_n) = \omega_1^{CK}$. It follows that $SR(\mathcal{A}(T)) = \omega_1^{CK} + 1$.
\end{proof}

\begin{cor}
There are computable structures $M$ of Scott rank $\omega_1^{CK}$, and of $\omega_1^{CK}+1$, that have no indiscernible ordered triples of imaginaries.
\end{cor}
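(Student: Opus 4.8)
The plan is to combine the two constructions just given. For the $\omega_1^{CK}+1$ case, we already have, by Theorem \ref{thm:ind-plus}, a computable tree $T$ (the tree of finite decreasing sequences in the Harrison ordering) such that $\mathcal{A}(T)$ has Scott rank $\omega_1^{CK}+1$, and by Lemma \ref{lem:no-ind} the structure $\mathcal{A}(T)$ has no indiscernible ordered triple of imaginaries whatsoever — Lemma \ref{lem:no-ind} is proved for an arbitrary tree $T$, so in particular it applies here. Thus the $\omega_1^{CK}+1$ half of the corollary is simply the conjunction of Theorem \ref{thm:ind-plus} and Lemma \ref{lem:no-ind}, and there is essentially nothing further to check: $\mathcal{A}(T)$ is computable, it has Scott rank $\omega_1^{CK}+1$, and no definable equivalence relation on it yields an order-indiscernible triple of classes.

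For the $\omega_1^{CK}$ case, the same recipe works using Theorem \ref{thm:3-7} in place of Theorem \ref{thm:ind-plus}. By Theorem \ref{thm:3-7} there is a computable tree $T$ with $SR(\mathcal{A}(T)) = \omega_1^{CK}$, and again Lemma \ref{lem:no-ind}, being stated for every tree $T$, tells us that this $\mathcal{A}(T)$ has no indiscernible ordered triple of imaginaries. So here too the result is immediate from the cited lemma and theorem. (One could alternatively invoke the fact that Makkai's and Knight--Millar's original structures of rank $\omega_1^{CK}$ are of the form $\mathcal{A}(T)$ and hence covered by Lemma \ref{lem:no-ind}, which is the observation of Goncharov and Knight mentioned in the introduction; but citing Theorem \ref{thm:3-7} directly is cleaner.)

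The only thing worth spelling out is that ``no indiscernible ordered triple of imaginaries'' is a strictly stronger conclusion than ``no indiscernible ordered triple'' of singletons, so the corollary genuinely supersedes the earlier Theorem \ref{thm:indis-triple}-style statements and is not merely a restatement; but this strengthening is exactly what Lemma \ref{lem:no-ind} delivers, since its hypothesis is about an arbitrary $L_{\omega_1\omega}$-definable equivalence relation with three classes, and singletons are the special case of the trivial equivalence relation. I do not expect any real obstacle here: the work was all done in Lemmas \ref{lem-3-3}, \ref{lem-3-6}, \ref{lem:no-ind} and Theorems \ref{thm:3-7}, \ref{thm:ind-plus}, and the corollary is a two-line assembly. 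If anything, the only point requiring a moment's care is to confirm that the $\mathcal{A}(T)$ produced in Theorem \ref{thm:3-7} is genuinely \emph{computable} (not merely arithmetical, as in Makkai's original), which is precisely the content of the Knight--Millar refinement cited there.
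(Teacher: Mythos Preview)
Your proposal is correct and matches the paper's own proof, which simply states that the corollary follows from Lemma~\ref{lem:no-ind} together with Theorems~\ref{thm:3-7} and~\ref{thm:ind-plus}. The additional remarks you make (about computability of $\mathcal{A}(T)$ and the strengthening from singletons to imaginaries) are accurate but go beyond what the paper records.
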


\begin{proof}
This follows from Lemma \ref{lem:no-ind} and Theorems \ref{thm:3-7} and \ref{thm:ind-plus}.
\end{proof}

We end with an open question.

\medskip

\noindent
\textbf{Question}.  Is there a structure of Scott rank $\omega_1^{CK}$ that is computably approximable and has no indiscernible sequences of imaginaries?


\begin{thebibliography}{999}

\bibitem{CGKM}  W.\ Calvert, S.\ S.\ Goncharov, J.\ F.\ Knight, and J.\ Millar, ``Categoricity of computable infinitary theories'', \emph{Arch.\ Math.\ Logic}, vol.\ 48(2009), 
pp.\ 25-38.

\bibitem{CKM}  W.\ Calvert, J.\ F.\ Knight, and J.\ Millar, ``Computable trees of Scott rank $\omega_1^{CK}$, and computable approximation'',  \emph{J.\ Symb.\ Logic}, vol.\ 71(2006), pp.\ 283-298.

\bibitem{CHMM}  B.\ Csima, V.\ Harizanov, R.\ Miller, and A.\ Montalb\'{a}n, ``Computability of Fra\"{i}ss\'{e} limits'', \emph{J.\ Symb.\ Logic}, vol.\ 76(2011), pp.\ 66-93.  

\bibitem{F}  C.\ Freer, \emph{Models with High Scott Rank}. Ph.D. thesis, Harvard University, 2008. 

\bibitem{Harrison}  J.\ Harrison, ``Recursive pseudo well-orderings'', \emph{Trans.\ of the Amer.\ Math.\ Soc.}, vol.\ 131(1968), pp.\ 526-543.

\bibitem{HMMM} M. Harrison-Trainor, A. Melnikov, R. Miller, and A. Montalb\'an, ``Computable Functors and Effective Interpretability'', preprint.

\bibitem{H}  C.\ W.\ Henson, ``A family of countable homogeneous graphs'', \emph{Pacific J.\ Math.}, vol.\ 38(1971), pp.\ 69-83.

\bibitem{HKSS} D.\ Hirschfeldt, B.\ Khoussainov, R.\ Shore, and A.\ Slinko, ``Degree spectra and computable dimensions in algebraic structures'', \emph{APAL}, vol.\ 115(2002), pp.\ 71-113. 

\bibitem{K}  J.\ F.\ Knight ``Structures of Scott rank $\omega_1^{CK}$'', expository paper, in \emph{Models, Logics, and Higher-Dimensional Categories:  A Tribute to the Work of Mih\'{a}ly Makkai}, ed.\ by Hart, et al, 2011, CRM Proceedings and Lecture Notes, pp.\ 157-168.

\bibitem{KM}  J.\ F.\ Knight and J.\ Millar, ``Computable structures of rank $\omega_1^{CK}$'',  \emph{J.\ Math.\ Logic}, vol.\ 10(2011), pp.\ 31-43. 

\bibitem{M}  M.\ Makkai, ``An example concerning Scott heights'', \emph{J. of Symb.\ Logic}, vol.\ 46(1981), pp.\ 301-318.

\bibitem{MS}  J.\ Millar and G.\ E.\ Sacks, ``Atomic models higher up'', \emph{APAL}, vol.\ 155(2008), pp.\ 225-241. 

\bibitem{N}  M.\ Nadel, ``Scott sentences and admissible sets'', \emph{Annals of Math.\ Logic}, vol.\ 7(1974), pp.\ 267-294.

\bibitem{Sc}  D.\ Scott, ``Logic with denumerably long formulas and finite strings of quantifiers'', \emph{The Theory of Models}, ed.\ by J.\ Addison, L.\ Henkin, and A.\ Tarski, North-Holland, 1965, pp.\ 329-341.                                                                                                                                                                                                                                                                                                         

\end{thebibliography}
\end{document}